\theoremstyle{definition}
\newtheorem{lemma}{Lemma}[section]
\newtheorem{remark}{Remark}[section]
\newtheorem{example}{Example}[section]
\newtheorem{theorem}{Theorem}[section]
\providecommand*{\Dist}[2]{\operatorname{dist}({#1};{#2})}   
\providecommand*{\Dist}[2]{\Dist{#1}{#2}}
\newcommand{\Bk}{{\boldsymbol{k}}}
\newcommand{\Bx}{{\boldsymbol{x}}}
\newcommand{\BB}{{\boldsymbol{B}}}
\newcommand{\BG}{{\boldsymbol{G}}}
\newcommand{\BL}{{\boldsymbol{L}}}
\newcommand{\Cc}{\mathcal{C}}
\newcommand{\Ce}{\mathcal{E}}
\newcommand{\Ci}{\mathcal{I}}
\newcommand{\Ck}{\mathcal{K}}
\newcommand{\Cp}{\mathcal{P}}
\newcommand{\bbN}{\mathbb{N}}
\newcommand{\bbR}{\mathbb{R}}
\newcommand{\bbZ}{\mathbb{Z}}
\newcommand*{\SP}[2]{\left<{#1},{#2}\right>} 
\providecommand*{\wh}[1]{\widehat{#1}}
\newcommand*{\N}[1]{\left\|{#1}\right\|}     
\newcommand*{\IP}[2]{\left({#1},{#2}\right)} 
\newcommand*{\SN}[1]{\left|{#1}\right|}      
\renewcommand*{\SP}[2]{\left({#1},{#2}\right)} 
\newcommand*{\SET}[1]{\left\{{#1}\right\}}
\newcommand{\DX}{\,\mathrm{d}\Bx}
\newcommand{\dmea}[1]{\,\mathrm{d}{#1}}
\newcommand*{\Lp}[2][\defaultdomain]{L^{#2}({#1})}
\newcommand*{\Lpv}[2][\defaultdomain]{\BL^{#2}({#1})}
\newcommand*{\NLp}[3][\defaultdomain]{\N{#2}_{\Lp[#1]{#3}}}
\newcommand*{\Ltwo}[1][\defaultdomain]{\Lp[#1]{2}}
\newcommand*{\Ltwov}[1][\defaultdomain]{\Lpv[#1]{2}}
\newcommand*{\NLtwo}[2][\defaultdomain]{\NLp[#1]{#2}{2}}
\newcommand*{\Hm}[2][\defaultdomain]{H^{#2}({#1})}
\newcommand*{\NHm}[3][\defaultdomain]{{\N{#2}}_{\Hm[{#1}]{#3}}}
\newcommand*{\Hmper}[2][\defaultdomain]{H_{p}^{#2}({#1})}
\newcommand*{\Unicont}[1][\overline\defaultdomain]{C({#1})}
\newcommand*{\Nunicont}[2][\overline\defaultdomain]{\N{#2}_{\Unicont[#1]}}
\newcommand{\D}{\mathrm{d}}
\newcommand{\be}{\begin{eqnarray}}
\newcommand{\ee}{\end{eqnarray}}
\newcommand{\ben}{\begin{eqnarray*}}
\newcommand{\een}{\end{eqnarray*}}
\providecommand*{\diag}[1]{\operatorname{diag}\left({#1}\right)}
\newcommand*{\dt}[1]{\frac{\D {#1}}{\D t}}
\newcommand*{\dphi}[1]{{\delta_\phi {#1}}}
\newcommand*{\ddv}[2]{\frac{\D^2 {#1}}{\D {#2}^2}}
\begin{document}
\title{ Spectral Deferred Correction Method for Landau--Brazovskii Model with Convex Splitting Technique}
\author[add1]{Donghang Zhang}\ead{zdh@lsec.cc.ac.cn}
\author[add1,add2,add3]{Lei Zhang\corref{cor1}}\ead{zhangl@math.pku.edu.cn}
\address[add1]{Beijing International Center for Mathematical Research, Peking University, Beijing 100871, China.}
\address[add2]{Center for Quantitative Biology, Peking University, Beijing 100871, China.}
\address[add3]{Center for Machine Learning Research, Peking University, Beijing 100871, China.}
\cortext[cor1]{Corresponding author.}

\begin{abstract}
The Landau--Brazovskii model is a well-known Landau model for finding the complex phase structures in microphase-separating systems ranging from block copolymers to liquid crystals. It is critical to design efficient numerical schemes for the Landau--Brazovskii model with energy dissipation and mass conservation properties. Here, we propose a mass conservative and energy stable scheme by combining the spectral deferred correction (SDC) method with the convex splitting technique to solve the Landau--Brazovskii model efficiently. 
An adaptive correction strategy for the SDC method is implemented to reduce the cost time and {preserve} energy stability.
Numerical experiments, including two- and three-dimensional periodic crystals in Landau--Brazovskii model, are presented to show the efficiency of the proposed numerical method.
\end{abstract}

\begin{keyword}
Landau--Brazovskii model, convex splitting, spectral deferred correction method, energy stability, mass conservation, adaptive correction strategy.
\end{keyword}
\maketitle

\section{Introduction} \label{sec1}
The Landau--Brazovskii (LB) model is a generic model to describe phase transitions driven by a short-wavelength instability between the disordered phases and the ordered phases \cite{bra75}. This model has been widely used to simulate several physical systems, such as block copolymers \cite{shi96,zh08}, liquid crystals \cite{kat93, wan21} and other microphase-separating systems \cite{lif97,yao22}.
More concretely, the energy functional of the LB Model is given by
\begin{align}\label{LB-energy}
\Ce(\phi) = \int_\Omega\left\{ \frac{1}2\left((\Delta \phi +  \phi\right)^2  -  \frac\alpha{2!}\phi^2 + \frac{1}{4!}\phi^4 - \frac\gamma{3!}\phi^3 \right\}\DX,\quad
\end{align}
where the order parameter field $\phi(\Bx)$ is the real-valued periodic function that measures the order of the system in $\bbR^d$, 
$\SN{\Omega}$ is the volume of domain $\Omega\subset\bbR^d$, 
and  $\alpha,\gamma$ are adjustable parameters . 
Compared with the typical Swift-Hohenberg model \cite{swi77} with double-well bulk energy,
the LB energy functional includes a cubic term that can be used to characterize the asymmetry of the order phase.
Note that $\Ce$ is invariant under the transformation $\phi\to-\phi,\gamma\to-\gamma$. 
For convenience, we suppose $\gamma \ge 0$.
Moreover, to conserve the number of particles in the system, $ \phi $  satisfies mass conservation as follow:
\begin{align}\label{mass}
\bar\phi:=\frac1{|\Omega|}\int_\Omega\phi\DX = 0.
\end{align}

To find the stationary states of the LB model, the Allen-Cahn gradient flow of the LB model (AC-LB) reads as
\begin{align}\label{AC-LB}
\partial_t\phi = -\dphi{\Ce}(\phi) + \beta(\phi),\quad 
\beta(\phi):=\frac1{\SN{\Omega}} \int_\Omega\left((1 - \alpha)\phi + \frac{\phi^3}{3!} - \frac{\gamma}{2} \phi^2\right)\DX,
\end{align}
where $\dphi{\Ce}$ is the first order variational derivative of $\Ce$ with respect to $\phi$, 
$\partial_t $ is the partial derivative with respect to $t$, 
and the last term $\beta(\phi)$ is the Lagrange multiplier to conserve the total mass of $\phi$ \cite{rub92,zh19}. 

It is straightforward to show that the equation \eqref{AC-LB} satisfies the mass conservation and energy dissipative law. First,
by taking the inner product of \eqref{AC-LB} with $1$ and using integration by parts, we have
\begin{align}\label{mass-con}
\dt{}\bar\phi = 0.
\end{align}
Next, by taking the inner product of \eqref{AC-LB} with $\partial_t\phi$ and using integration by parts and the mass conservation \eqref{mass-con}, we obtain the following energy dissipative law:
\begin{align}\label{dissipation}
\dt{}\Ce(\phi) = -\int_\Omega (\partial_t\phi)^2 \DX <0.
\end{align}
Therefore, the goal of this paper is to develop the efficient numerical method for the AC-LB equation \eqref{AC-LB} while keeping the mass conservation \eqref{mass-con} and desired energy dissipation \eqref{dissipation} during the iterative process.
Then the energy minimizer of the LB model \eqref{LB-energy} is obtained with a proper choice of initialization.

There are many efforts devoted to designing the numerical schemes for the nonlinear gradient flow equations with energy dissipation and mass conservation properties.
For example, typical energy stable schemes to gradient flow include the convex splitting methods \cite{eyr98}, the exponential time differencing schemes \cite{du21}, the stabilized factor methods \cite{she10}, and invariant energy quadrature \cite{yan16} and scalar auxiliary variable methods \cite{she19} for modified energy.
What's more, the Cahn-Hilliard gradient flow can automatically ensure the mass conservation property. 
There have been many works based on Cahn-Hilliard gradient flow, such as the phase field crystal model \cite{hu09,wis09}, modified phase field crystal model \cite{bas13-1,bas13-2,wan11}, and square phase field crystal model \cite{che19,Wan21}.
Numerically, the gradient flow needs to be discretized in both the space and time domains. The typical spatial discretization techniques
include 
the finite difference method \cite{hu09,wis09,wan11,xu21}, the finite element method \cite{du08,wan20} and the Fourier pseudo-spectral method \cite{che19,jk14,she98,wan21,yin21}
.

To calculate the stationary states of the LB model, an efficient numerical method was developed by using the Fourier expansion of order parameter to find the meta-stable and stable phases in the diblock copolymer system \cite{zh08}. 
A second-order invariant energy quadrature approach with the stabilization technique was proposed to keep the required accuracy while using large time steps \cite{zh19}.  
To make the numerical scheme linear while preserving the nonlinear energy stability, a second order the scalar auxiliary variable method for the square phase field crystal equation was proposed and analyzed \cite{Wan21}. 
The high order linear convex splitting schmes were used in the expitaxial thin film model without slope selection. 
The energy stability and convergence analysis can be found in \cite{che12,ha21,li18,men20}
By using the optimization techniques, Jiang et al. proposed adaptive accelerated Bregman proximal gradient methods for phase field crystal equation\cite{jk20}.

In this paper, we propose an efficient mass conservative and energy stable scheme for the LB model by combining the convex splitting technique with the spectral deferred correction (SDC) method. 
The mass conservative and energy stable properties are proved for the linear convex splitting method for the AC-LB equation \eqref{AC-LB}.
The SDC method was first introduced to solve initial value ordinary differential equations (ODEs) in \cite{dut00}, and the central idea of the SDC method is to convert the original ODEs into the corresponding Picard equation and then use a deferred correction procedure in the integral formulation to achieve higher-order accuracy iteratively.
We choose the SDC method combined with the convex splitting technique for the following reasons: 
Iteration loops can improve formal accuracy flexibly and straightforwardly; 
the SDC method was designed to handle stiff systems, such as singularly nonlinear equations. 
Moreover, an adaptive correction strategy for the SDC method is proposed to increase the rate of convergence and reduce the cost time.
Both two- and three-dimensional periodic crystals in the LB model are shown in numerical examples to demonstrate the accuracy and efficiency of the proposed approach.

The rest of this paper is organized as follows. 
In section \ref{sec2}, the convex splitting scheme is constructed, which is linear and unconditionally stable for the AC-LB equation.
We will give some direct energy stability proof by applying the general result of the convex-concave argument.
In section \ref{sec3}, we give a brief review of the classical SDC method and combine the SDC method with the convex splitting method to solve the AC-LB equation. 
{The Fourier pseudo-spectral method} is presented for the spatial discretization in section \ref{sec4}.
Numerical experiments are carried out in section \ref{sec5}, and some concluding remarks will be given in the final section.

\section{Convex splitting method} \label{sec2}

We suppose the domain $\Omega\subset\bbR^d$ is rectangular. 
Let $\Ltwo$  be the space of square-integrable functions. 
The inner product and norm on $\Ltwo$ are denoted by 
\begin{align*}
\IP{\phi}{\psi} := \int_\Omega\phi\psi\DX,
\quad\NLtwo{\phi} := \sqrt{\IP{\phi}{\phi}}.
\end{align*}
For any integer $m>0$, denote
$\Hm{m} := \SET{v\in\Ltwo: D^\xi{v}\in\Ltwo,\SN{\xi}\le{m}},$
where $\xi$ is a non-negative triple index.
Let $\Hmper{m}$ be the subspace composed of periodic functions on $\Hm{m}$.
Define the space $\Unicont$ to consist of all functions which are bounded and uniformly continuous on $\Omega$. 
$\Unicont$ is a Banach space with norm given by $\Nunicont{\phi} := \sup_{\Bx\in\Omega}|\phi(\Bx)|.$
Vector-valued quantities will be denoted by boldface notations, such as $\Ltwov:=(\Ltwo)^d$.

\subsection{A convex splitting of the energy functional}
We introduce a sufficiently large positive constant $S$, the convex splitting form $\Ce(\phi)=\Ce^c(\phi)-\Ce^e(\phi)$ can be taken as
\begin{align}\label{ce-energy}
\begin{split}
\Ce^c(\phi) &= \int_\Omega\left\{\frac{1}2\left((\Delta+1)\phi\right)^2 - \frac\alpha{2!}\phi^2 + \frac{S} 2\phi^2\right\}\DX, \\
\Ce^e(\phi) &= \int_\Omega\left\{\frac{S}2\phi^2 - \frac{1}{4!}\phi^4 + \frac{\gamma}{3!}\phi^3 \right\}\DX,
\end{split}
\end{align}
where “c” (“e”) refers to the contractive (expansive) part of the energy. 
This idea of adding and subtracting a term ${S}/2\NLtwo{\phi}^2$ to a nonlinear energy $\Ce$ to obtain a stable time discretization is based on the convex-concave splitting of Eyre \cite{eyr98}. 

A calculation of the second variation shows
\begin{align*}
\ddv{\Ce^c}{s}(\phi+s\psi)\bigg|_{s=0} &= \int_\Omega\left((\Delta+1)^2 + S - \alpha \right)\psi^2 \DX, \\
\ddv{\Ce^e}{s}(\phi+s\psi)\bigg|_{s=0} &= \int_\Omega\left(S-\frac{\phi^2}{2} + \gamma\phi\right)\psi^2 \DX,
\end{align*}
which implies that $\Ce^c$ is globally convex on $\Ltwo$ if $S>\alpha$, 
and $\Ce^e$ is locally convex depending on $\Nunicont{\phi}$.
Fortunately, we find that the $\Unicont$-bound of the state function $\phi$ is depending on its LB energy. The argument is similar to the scheme analysis in \cite{els13}, but for the sake of completeness, we will provide a condensed version of the proof.

\begin{lemma}\label{lm2.1}
Assume $\alpha < 1 $. For any $\phi \in \Hmper{2}$ with finite energy $\Ce(\phi)$ there is a constant $\lambda>0$ independent of $\phi$ such that 
\begin{align}
\Nunicont{\phi} \le \sqrt{\frac{\Ce(\phi) + (9\gamma^4 + 3)\SN{\Omega}}{\lambda}} := \Cc(\phi)
\end{align}
\end{lemma}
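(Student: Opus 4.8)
The plan is to bound $\Nunicont{\phi}$ through the Sobolev embedding $\Hmper{2}\contemb\Unicont$, which holds in the relevant dimensions $d\le 3$, so that it suffices to control $\NHm{\phi}{2}$ by $\Ce(\phi)$. The delicate point is that $\Delta+1$ is \emph{not} coercive on $\Hmper{2}$ -- it annihilates the Fourier modes with $|k|=1$ -- so the favourable term $\frac12\NLtwo{(\Delta+1)\phi}^2$ of the energy cannot by itself control $\NHm{\phi}{2}$, and the quartic term of $\Ce$ has to be exploited to recover the missing low-frequency information.

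\emph{Step 1 (lower bound on the bulk density).} Put $E:=\Ce(\phi)+(9\gamma^4+3)\SN{\Omega}$ and split the bulk integrand pointwise:
\begin{align*}
\frac{t^4}{4!}-\frac{\gamma}{3!}t^3-\frac{\alpha}{2!}t^2=\left(\frac{t^4}{48}-\frac{\gamma}{6}t^3+9\gamma^4\right)+\left(\frac{t^4}{48}-\frac{\alpha}{2}t^2+3\right)-(9\gamma^4+3).
\end{align*}
Each parenthesis is nonnegative by minimising a one-variable polynomial: $9\gamma^4=\sup_{s\ge0}(\tfrac{\gamma}{6}s^3-\tfrac1{48}s^4)$ since $\gamma\ge0$, and $3=\sup_{s}(\tfrac12 s^2-\tfrac1{48}s^4)$ together with $\tfrac\alpha2 s^2\le\tfrac12 s^2$ since $\alpha<1$; the second parenthesis is in fact \emph{strictly} positive (its global minimum equals $3(1-(\max(\alpha,0))^2)>0$) because $\alpha<1$ is strict. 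Hence the integrand is $\ge-(9\gamma^4+3)$, so $E\ge\tfrac12\NLtwo{(\Delta+1)\phi}^2\ge0$; and strict positivity of the second parenthesis gives a constant $\delta=\delta(\alpha)>0$ with $E\ge\delta\SN{\Omega}$. Retaining a fixed fraction of $t^4$ in the same estimate yields $\NLp{\phi}{4}^{4}\lesssim E$, and then $\NLtwo{\phi}^2\le\SN{\Omega}^{1/2}\NLp{\phi}{4}^{2}\lesssim E$ by H\"older together with $E\ge\delta\SN{\Omega}$.

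\emph{Step 2 (recover $\NLtwo{\Delta\phi}$ and conclude).} For periodic $\phi$, integration by parts gives $\NLtwo{(\Delta+1)\phi}^2=\NLtwo{\Delta\phi}^2-2\NLtwov{\nabla\phi}^2+\NLtwo{\phi}^2$ and $\NLtwov{\nabla\phi}^2=-\IP{\phi}{\Delta\phi}\le\tfrac14\NLtwo{\Delta\phi}^2+\NLtwo{\phi}^2$, whence $\NLtwo{\Delta\phi}^2\le2\NLtwo{(\Delta+1)\phi}^2+2\NLtwo{\phi}^2\lesssim E$ by Step 1. Since $\NHm{\phi}{2}^2\lesssim\NLtwo{\Delta\phi}^2+\NLtwo{\phi}^2$ for periodic functions, we get $\NHm{\phi}{2}^2\lesssim E$, and the embedding $\Hmper{2}\contemb\Unicont$ gives $\Nunicont{\phi}^2\le C\NHm{\phi}{2}^2\le C'E$, with $C'$ depending only on $d$, $\alpha$, $\gamma$ and the geometry of $\Omega$ but not on $\phi$. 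Taking $\lambda:=1/C'$ gives the asserted bound with $\Cc(\phi)$ as stated.

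\emph{Main obstacle.} The heart of the matter is Steps 1 and 2 taken together: defusing the non-coercivity of $\Delta+1$. One cannot hope for $\NHm{\phi}{2}\lesssim\NLtwo{(\Delta+1)\phi}$, so the quartic term has to be used twice -- once to absorb the cubic and quadratic parts of the bulk density, which is precisely what produces the constant $9\gamma^4+3$, and once to keep genuine $L^4$, hence $L^2$, control of $\phi$ -- and the Young's-inequality bookkeeping must be arranged so that the additive term is no larger than $(9\gamma^4+3)\SN{\Omega}$ (the hypothesis $\alpha<1$ being used both for the quadratic splitting and for the strict lower bound $E\ge\delta\SN{\Omega}$ that lets residual $\SN{\Omega}$ terms be absorbed into $E$). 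Everything downstream -- the interpolation identity and the Sobolev embedding -- is routine.
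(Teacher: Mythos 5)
Your proof is correct. The splitting of the bulk potential is, up to presentation, the paper's own: your two suprema $9\gamma^4=\sup_{s\ge0}\bigl(\tfrac{\gamma}{6}s^3-\tfrac{1}{48}s^4\bigr)$ and $3=\sup_{s}\bigl(\tfrac12 s^2-\tfrac1{48}s^4\bigr)$ are exactly the paper's integrated Young inequalities $\frac{\gamma}{3!}\NLp{\phi}{3}^3\le\frac12\cdot\frac{1}{4!}\NLp{\phi}{4}^4+9\gamma^4\SN{\Omega}$ and $\frac{1}{4!}\NLp{\phi}{4}^4\ge\NLtwo{\phi}^2-6\SN{\Omega}$ written pointwise, which is why you land on the same additive constant $(9\gamma^4+3)\SN{\Omega}$. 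Where you genuinely depart from the paper is in defusing the non-coercivity of $\Delta+1$. The paper expands $\frac12\NLtwo{(\Delta+1)\phi}^2=\frac12\NLtwo{\Delta\phi}^2-\NLtwo{\nabla\phi}^2+\frac12\NLtwo{\phi}^2$, takes its $L^2$ control of $\phi$ from the coefficient $\frac{1-\alpha}{2}$ together with half of the quartic term, and then splits $-\NLtwo{\nabla\phi}^2$ by an $\epsilon$-Cauchy--Schwarz; the hypothesis $\alpha<1$ enters precisely to guarantee an admissible $\epsilon\in(1/(2-\alpha),1)$ keeping both coefficients positive. You instead leave $\NLtwo{(\Delta+1)\phi}^2$ intact, extract $\NLtwo{\phi}^2\lesssim E$ from the quartic term alone (via H\"older and your lower bound $E\ge\delta\SN{\Omega}$, which is where your use of $\alpha<1$ is concentrated), and recover $\NLtwo{\Delta\phi}^2$ algebraically afterwards. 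The price of your route is the extra absorption step $E\ge\delta\SN{\Omega}$, needed because retaining a fraction of $t^4$ inflates the volume constants beyond $(9\gamma^4+3)\SN{\Omega}$; what it buys is a cleaner division of labour (all low-frequency control comes from the bulk term, the differential part is used only for $\Delta\phi$) and slightly more robustness, since no $\epsilon$-tuning is required. Both arguments then conclude identically via $\NHm{\phi}{2}^2\lesssim\NLtwo{\Delta\phi}^2+\NLtwo{\phi}^2$ for periodic functions and the embedding $\Hmper{2}\hookrightarrow\Unicont$.
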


\begin{proof}
By H\"{o}lder and Young inequalities, we deduce that 
\begin{align}\label{use-ineq}
\begin{split}
\frac{\gamma}{3!}\NLp{\phi}{3}^3 &\le \frac1{2} \frac{1}{4!} \NLp{\phi}{4}^4 + 9 \gamma^4 \SN{\Omega},\\ 
\frac{1}{4!} \NLp{\phi}{4}^4 &\ge \NLtwo{\phi}^2 - 6 \SN{\Omega} \\
\NLtwo{\nabla \phi}^2 &= - \SP{\phi}{\Delta \phi} \le \frac 1{2\epsilon} \NLtwo{\phi}^2 + \frac{\epsilon}{2} \NLtwo{\Delta \phi}^2,
\end{split}
\end{align}
for any $\epsilon > 0$.  
By substituting \eqref{use-ineq} into the original energy \eqref{LB-energy}, we get
\begin{align*}
\Ce(\phi) &= \frac{1}{2} \NLtwo{\Delta\phi}^2 - \NLtwo{\nabla \phi}^2 + \frac{1-\alpha}{2}\NLtwo{\phi}^2 - \frac{\gamma}{3!}\NLp{\phi}{3}^3 + \frac{1}{4!}\NLp{\phi}{4}^4 \\
&\ge \frac{1-\epsilon}{2}\NLtwo{\Delta \phi}^2 + \frac{2-\alpha-\frac{1}{\epsilon}}{2} \NLtwo{\phi}^2 - (9 \gamma^4 + 3) \SN{\Omega} \\
&\ge \frac{3}{2} \eta \left( \NLtwo{\Delta \phi}^2 + \NLtwo{\phi}^2  \right) - (9 \gamma^4 + 3) \SN{\Omega} 
\end{align*}
for some $\eta>0$. Then we have 
\begin{align*}
&\Ce(\phi) + (9 \gamma^4 + 3) \SN{\Omega} \ge \eta \left( \NLtwo{\Delta \phi}^2 + \NLtwo{\phi}^2 + \frac{1}{2} \left( \NLtwo{\Delta \phi}^2 + \NLtwo{\phi}^2 \right)  \right) \\
& \ge \eta \left( \NLtwo{\Delta \phi}^2 + \NLtwo{\phi}^2 + \NLtwo{\nabla \phi}^2 \right) = \eta \NHm{\phi}{2}^2 \ge \lambda \Nunicont{\phi}^2
\end{align*}
for some $\lambda >0$ by Sobolev imbedding theorem $\Hmper{2} \subset \Hm{2} \hookrightarrow \Unicont$ (cf. \cite{ada03}). 
The proof is finished.
\end{proof}

For the forthcoming analysis, we introduce the classical convex-concave splitting argument without proof. 
For proof of the lemma, the reader is referred to \cite{wis09}.

\begin{lemma}\label{lm2.2}
Suppose that $\phi,\psi \in \Hmper{2}$, and $\Ce^c,\Ce^e$ are all convex on $\Ltwo$. 
Then 
\begin{align}\label{CS-argue}
\Ce(\phi)-\Ce(\psi) \le \SP{\dphi{\Ce^c}(\phi)-\dphi{\Ce^e}(\psi)}{\phi-\psi},
\end{align}
where $\dphi{}$ are first order variational derivative with respect to $\phi$.
\end{lemma}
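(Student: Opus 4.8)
The plan is to reduce the functional inequality \eqref{CS-argue} to the elementary one-dimensional fact that a convex function lies on or above each of its tangent lines, applied separately to the convex pieces $\Ce^c$ and $\Ce^e$ along the line segment joining $\phi$ and $\psi$, and then to recombine using $\Ce=\Ce^c-\Ce^e$. This is essentially the argument of \cite{wis09}; I would reproduce a condensed version for completeness.

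First I would fix $\phi,\psi\in\Hmper2$ and introduce the two real functions $g(s):=\Ce^c\bigl(\phi+s(\psi-\phi)\bigr)$ and $\tilde g(s):=\Ce^e\bigl(\psi+s(\phi-\psi)\bigr)$ for $s\in[0,1]$. Since $s\mapsto\phi+s(\psi-\phi)$ traces an affine segment inside $\Hmper2\subset\Ltwo$ and $\Ce^c,\Ce^e$ are convex by hypothesis, both $g$ and $\tilde g$ are convex on $[0,1]$. Differentiating along these paths — which is legitimate because the embedding $\Hmper2\hookrightarrow\Unicont$ already used in Lemma~\ref{lm2.1} makes each polynomial term in \eqref{ce-energy} differentiable in the direction $\psi-\phi$ — identifies $g'(0)=\SP{\dphi{\Ce^c}(\phi)}{\psi-\phi}$ and $\tilde g'(0)=\SP{\dphi{\Ce^e}(\psi)}{\phi-\psi}$, while $g(0)=\Ce^c(\phi)$, $g(1)=\Ce^c(\psi)$, $\tilde g(0)=\Ce^e(\psi)$, $\tilde g(1)=\Ce^e(\phi)$. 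No explicit formula for $\dphi{\Ce^c}$ or $\dphi{\Ce^e}$ is needed at this stage, only this identification of the directional derivative with an $\Ltwo$-pairing.

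Next I would invoke convexity in the tangent-line form $g(1)\ge g(0)+g'(0)$ and $\tilde g(1)\ge\tilde g(0)+\tilde g'(0)$, both of which hold for any convex function differentiable at the left endpoint. The first gives $\Ce^c(\phi)-\Ce^c(\psi)\le\SP{\dphi{\Ce^c}(\phi)}{\phi-\psi}$ and the second gives $\Ce^e(\psi)-\Ce^e(\phi)\le-\SP{\dphi{\Ce^e}(\psi)}{\phi-\psi}$. Adding these two inequalities, using $\Ce=\Ce^c-\Ce^e$ on the left-hand side and bilinearity of $\SP{\cdot}{\cdot}$ on the right, yields exactly \eqref{CS-argue}.

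The only point requiring genuine care — the \emph{main obstacle}, such as it is — is the differentiability step: one must check that the first variation of each monomial term in $\Ce^c$ and $\Ce^e$ exists as a bounded linear functional on $\Hmper2$ and that the relevant difference quotients converge. For the quadratic and bi-Laplacian terms this is immediate; for the cubic and quartic terms it follows by expanding $\bigl(\phi+s(\psi-\phi)\bigr)^k$ for $k=3,4$ and controlling the remainder of order $s$ in $\Ltwo$ via the $\Unicont$-bounds of $\phi$ and $\psi$, which are finite on $\Hmper2$ by Lemma~\ref{lm2.1}. Once this is in hand, the remainder of the proof involves no further estimates.
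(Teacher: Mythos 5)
Your proof is correct and is precisely the classical convex--concave splitting argument: the paper itself states Lemma~\ref{lm2.2} without proof and defers to \cite{wis09}, and your tangent-line reduction along the two affine segments, followed by adding the two convexity inequalities and using $\Ce=\Ce^c-\Ce^e$, is exactly that standard argument. The only cosmetic point is that the differentiability of the polynomial terms needs only the Sobolev embedding $\Hmper{2}\hookrightarrow\Unicont$ itself (as invoked inside the proof of Lemma~\ref{lm2.1}), not the energy-dependent bound that Lemma~\ref{lm2.1} concludes.
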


\subsection{A linear stable time discretization}\label{sec-CS}
For the choices \eqref{ce-energy}, we obtain a convex splitting scheme of \eqref{AC-LB}: Find $\phi^{n+1} \in \Hmper{2}, n \in \bbN$ such that 
\begin{align}\label{CS}
\frac{\phi^{n+1}-\phi^n}{\Delta t} = - \left(\dphi{\Ce^c}(\phi^{n+1})-\dphi{\Ce^e}(\phi^n)\right) + \beta(\phi^n),
\end{align}
where $\phi^n\approx\phi(t_n)$ is the numerical solution at the $n$-th level $t_n=n\Delta t$ and $\Delta t$ is the step size.

The above scheme satisfies many properties. 
First of all, The scheme \eqref{CS} is explicit in nonlinear terms and hence solves a linear system to generate $\phi^{n+1}$ at the next time level. 
By taking $\Ltwo$ product of \eqref{CS} with $1$ and using integration by parts, we obtain 
\begin{align}\label{mass-CS}
\overline{\phi^{n}} = \overline{{\phi}^0}\quad \forall n \in \bbN,
\end{align}
which implies the proposed scheme can preserve the average mass precisely.
Also, the scheme \eqref{CS} is $\Unicont$-stable and decreases the original energy \eqref{LB-energy} in every step, as shown by the following theorem. 

\begin{theorem}\label{th2.1}
Assume $\alpha < 1 $. For any $\phi^0\in\Hmper{2}$ with finite energy $\Ce(\phi^0)$ there exists a $S>0$ such that the scheme \eqref{CS} is stable for any $\Delta{t}>0$ in the sense
\begin{align}\label{stable-CS}
\Nunicont{\phi^n} \le \Cc^0, \quad \Ce(\phi^{n+1}) \le \Ce(\phi^n) \quad \forall{n}\in\bbN,
\end{align}
where $\Cc^0:= \Cc(\phi^0)$.
\end{theorem}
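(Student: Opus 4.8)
The plan is to prove both bounds in \eqref{stable-CS} simultaneously by induction on $n$, using the convex–concave estimate of Lemma~\ref{lm2.2} to get energy decay and Lemma~\ref{lm2.1} to convert energy decay into the $\Unicont$–bound. First I would fix the constant $S$ in terms of $\alpha,\gamma,\SN{\Omega}$ and $\Ce(\phi^0)$ only, hence independently of $\Delta t$. Take $S>\alpha$: by the second–variation computation this makes $\Ce^c$ globally convex on $\Ltwo$, and it makes the operator $(\Delta+1)^2+(S-\alpha)\,\mathrm{Id}$ positive definite, so the linear problem \eqref{CS} is uniquely solvable with $\phi^{n+1}\in\Hmper{2}$ (indeed, by elliptic regularity, $\phi^{n+1}\in\Hmper{4}$, so $\Ce(\phi^{n+1})$ is finite and Lemmas~\ref{lm2.1}--\ref{lm2.2} apply to it). In addition I would impose
\[
S\;\ge\;\tfrac12\bigl(\Cc^0+1\bigr)^2+\gamma\bigl(\Cc^0+1\bigr),
\]
which, using $\gamma\ge0$ and $S-\tfrac12 v^2+\gamma v\ge S-\tfrac12\Nunicont{v}^2-\gamma\Nunicont{v}$ pointwise, makes the integrand $S-\tfrac12 v^2+\gamma v$ occurring in $\ddv{\Ce^e}{s}$ nonnegative whenever $\Nunicont{v}\le\Cc^0+1$; equivalently, $\Ce^e$ is convex along every segment contained in the ball $B:=\{v\in\Unicont:\Nunicont{v}\le\Cc^0+1\}$.

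The induction then runs as follows. For $n=0$ the $\Unicont$–bound is Lemma~\ref{lm2.1}, i.e. $\Nunicont{\phi^0}\le\Cc(\phi^0)=\Cc^0$. Assume $\Nunicont{\phi^n}\le\Cc^0$ and $\Ce(\phi^n)\le\Ce(\phi^0)$. The core step is: \emph{if} $\phi^{n+1}\in B$, then the whole segment $[\phi^n,\phi^{n+1}]$ lies in $B$ (its pointwise values never exceed $\Cc^0+1$), so $\Ce^e$ is convex along it, and together with the global convexity of $\Ce^c$ Lemma~\ref{lm2.2} gives
\[
\Ce(\phi^{n+1})-\Ce(\phi^n)\;\le\;\SP{\dphi{\Ce^c}(\phi^{n+1})-\dphi{\Ce^e}(\phi^n)}{\phi^{n+1}-\phi^n}.
\]
Testing \eqref{CS} with $\phi^{n+1}-\phi^n$ in $\Ltwo$ and using that $\beta(\phi^n)$ is a constant, the mass identity \eqref{mass-CS} annihilates its contribution, $\SP{\beta(\phi^n)}{\phi^{n+1}-\phi^n}=\beta(\phi^n)\,\SN{\Omega}\,(\overline{\phi^{n+1}}-\overline{\phi^n})=0$, so the right–hand side above equals $-\tfrac1{\Delta t}\NLtwo{\phi^{n+1}-\phi^n}^2\le0$. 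Hence $\Ce(\phi^{n+1})\le\Ce(\phi^n)\le\Ce(\phi^0)$, and since $\Cc(\cdot)$ is increasing in the energy, Lemma~\ref{lm2.1} yields $\Nunicont{\phi^{n+1}}\le\Cc(\phi^{n+1})\le\Cc^0$, which closes the induction.

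The one genuine difficulty (everything else being routine elliptic theory and the Sobolev embedding already used in Lemma~\ref{lm2.1}) is that the display above was derived \emph{under the hypothesis} $\phi^{n+1}\in B$, that is, an $L^{\infty}$–bound on $\phi^{n+1}$, which is precisely part of what must be proved; as stated the estimate is only a priori. I would remove this circularity by a continuation in the step size: for $\tau\in(0,\Delta t]$ let $\psi_\tau\in\Hmper{2}$ solve \eqref{CS} with $\Delta t$ replaced by $\tau$, so that $\psi_{\Delta t}=\phi^{n+1}$, the map $\tau\mapsto\psi_\tau$ is continuous into $\Unicont$, and $\psi_\tau\to\phi^n$ in $\Unicont$ as $\tau\to0^+$ (resolvent estimate for $(\mathrm{Id}+\tau\,\dphi{\Ce^c})^{-1}$). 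Because $\Nunicont{\phi^n}\le\Cc^0<\Cc^0+1$, the set $G$ of $\tau$ for which $\Nunicont{\psi_{\tau'}}\le\Cc^0+1$ for all $\tau'\in(0,\tau]$ contains a right neighbourhood of $0$; on $G$ the argument of the previous paragraph applies verbatim to $\psi_\tau$ (the mass identity for the $\tau$–scheme again kills the $\beta$ term) and produces the \emph{strictly smaller} bound $\Nunicont{\psi_\tau}\le\Cc^0$, so by continuity the threshold $\Cc^0+1$ is never attained and a standard connectedness argument forces $G=(0,\Delta t]$. In particular $\phi^{n+1}=\psi_{\Delta t}\in B$, which retroactively justifies the energy estimate and completes the proof. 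If a purely formal derivation is acceptable, as is customary in this literature, this continuation step may simply be replaced by a remark that the bound is a priori and that the linear step \eqref{CS} is well posed.
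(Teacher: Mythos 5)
Your proof is correct, and its skeleton --- induction on $n$, the convex--concave inequality of Lemma~\ref{lm2.2} tested against the scheme (with the $\beta$ term annihilated by mass conservation), and Lemma~\ref{lm2.1} to convert the energy bound back into the $\Unicont$ bound --- is the same as the paper's. The genuine difference is in how you break the circularity you correctly identify, namely that convexity of $\Ce^e$ along $[\phi^n,\phi^{n+1}]$ presupposes an $L^\infty$ bound on $\phi^{n+1}$ which is itself part of the conclusion. You resolve it by continuation in the step size: continuity of $\tau\mapsto\psi_\tau$ into $\Unicont$, the fact that on the good set the a priori estimate returns the strictly smaller bound $\Cc^0<\Cc^0+1$, and connectedness of $(0,\Delta t]$. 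The paper instead replaces $\Ce^e$ by a truncated functional $\widetilde{\Ce}^e$ that agrees with $\Ce^e$ (including its variational derivative) on the ball $\Nunicont{\phi}\le\Cc^0$ containing $\phi^n$, but whose quartic/cubic part is extended quadratically outside that ball so that $\widetilde{\Ce}^e$ is \emph{globally} convex on $\Ltwo$; the convex--concave inequality for $\Ce^c-\widetilde{\Ce}^e$ then holds with no hypothesis on $\phi^{n+1}$, gives an unconditional bound on the modified energy of $\phi^{n+1}$, hence by the coercivity argument of Lemma~\ref{lm2.1} the a priori bound $\Nunicont{\phi^{n+1}}\le\sqrt{2S}$, after which the original $\Ce^e$ --- convex on that larger ball by the choice of $S$ --- can be used. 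Your route stays with the original functional and gets away with a smaller $S$ (convexity only on the radius-$(\Cc^0+1)$ ball), at the price of the resolvent continuity estimate and the topological continuation step; the paper's route is purely variational and algebraic but needs the truncation construction and the additional lower bound $S\ge(\Ce(\phi^0)+\gamma^2\SN{\Omega}/4)/(2\lambda)$. Both arguments are sound.
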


\begin{proof}
Without loss of generality, we assume $\Cc^0 \ge 1$.
Choose
\begin{align}\label{CS-choose}
S > \max \left( 1, \alpha, \frac{1}{2} (\Cc^0)^2 + \gamma \Cc^0, \frac{\Ce(\Cc^0)+\gamma^2\SN{\Omega}/4}{2\lambda} \right)
\end{align}
We will prove the theorem by induction on $n\in\bbN$, so assume $\Ce(\phi^n) \le \Ce(\phi^0)$ and $\Nunicont{\phi^n} \le \Cc^0$. 

Let us introduce
\begin{align*}
\widetilde{\Ce}^e &= \int_{\Omega} \left\{ \frac{S}{2} \phi^2 - F(\phi) \right\} \DX, \quad \text{where} \\
F(\phi) &= 
\begin{cases}
\frac{1}{4!}\phi^4 - \frac{\gamma}{3!}\phi^3, \quad &\Nunicont{\phi} \le \Cc^0,\\
\left( \frac{1}{4} (\Cc^0)^2 + \frac{\gamma}{2} \Cc^0 \right) \phi^2 + \left( \frac{1}{2}(\Cc^0)^2-\gamma \Cc^0 \right)\SN{\phi},\quad & \text{else}.
\end{cases}
\end{align*}
Since $S$ satisfies \eqref{CS-choose}, a calculation of the second variation yield
\begin{align*}
\ddv{\widetilde\Ce^e}{s}(\phi+s\psi)\bigg|_{s=0} & \ge \int_\Omega\left( S - \left( \frac{1}{2} (\Cc^0)^2 + \gamma \Cc^0 \right) \right)\psi^2 \DX > 0.
\end{align*}
Thus, $\widetilde{\Ce}^e$ is globally convex on $\Ltwo$. 
According to the equivalent argument, we can prove that $\Ce^e$ is convex on $\{\phi \in \Ltwo: \Nunicont{\phi} \le \sqrt{2S} \}$. 
Then, using the convexity of $\Ce^c$ and $\widetilde{\Ce}^e$, we employ the traditional convex-concave splitting argument \eqref{CS-argue},
\begin{align}\label{use-thm-1}
\begin{split}
\Ce^c(\phi^{n+1})-\widetilde{\Ce}^e(\phi^{n+1}) 
&\le \Ce^c(\phi^{n})-\widetilde{\Ce}^e(\phi^{n}) + 
\SP{\delta_\phi \Ce^c(\phi^{n+1}) - \delta_\phi \widetilde{\Ce}^e(\phi^n)}{\phi^{n+1}-\phi^n}\\
&= \Ce^c(\phi^{n})-{\Ce}^e(\phi^{n}) + 
\SP{\delta_\phi \Ce^c(\phi^{n+1}) - \delta_\phi {\Ce}^e(\phi^n)}{\phi^{n+1}-\phi^n}. 
\end{split}
\end{align}
Inserting scheme \eqref{CS} into \eqref{use-thm-1} and using mass conservation \eqref{mass-CS}, we have
\begin{align}\label{use-thm-2}
\Ce^c(\phi^{n+1})-\widetilde{\Ce}^e(\phi^{n+1}) 
\le \Ce(\phi^n) - \frac{1}{\Delta{t}} \NLtwo{\phi^{n+1}-\phi^n}^2 
\le \Ce(\phi^n) \le \Ce(\phi^0).
\end{align}

By same argument as in Lemma \ref{lm2.1}, \eqref{use-thm-2} and \eqref{CS-choose} can lead to a desired bound
\begin{align*}
\Nunicont{\phi^{n+1}} \le \sqrt{\frac{\Ce(\phi^0)+\gamma^2\SN{\Omega}/4}{\lambda}} \le \sqrt{2S}. 
\end{align*}
The proof is finished by using the classical convex-concave splitting argument \eqref{CS-argue} to $\Ce^c-\Ce^e$.
\end{proof}

\begin{remark} 
The linear convex splitting scheme \eqref{CS} can achieve first order convergence by combining stability and first order consistency.
The similar argument is used to analysis error estimate of the convex splitting scheme for the phase field crystal model in \cite{wis09}.
In fact, a careful inspection of its proof shows that it also applies to our cases. 
We do not elaborate on the details.
\end{remark}

\section{Spectral deferred correction method.} \label{sec3}
To construct an efficient numerical method for the AC-LB equation,
we develop a novel SDC method by combining the semi-implicit SDC method \cite{dut00,min03} with the convex splitting method.
First, we present the original SDC method including the classical deferred correction and its some technique details.
The SDC method for the AC-LB equation will be presented next. 

\subsection{The classical deferred correction}
The original deferred correction method was introduced to solve the following Cauchy problem. 
\begin{align}\label{Cauchy-ODE}
\begin{split}
\phi'(t) &= G(\phi) \quad t \in (a,b],\\
\phi(a)&= \phi_a. 
\end{split}
\end{align}
The deferred correction approach works by converting the original ODEs \eqref{Cauchy-ODE} into the corresponding Picard equation
\begin{align}\label{pre-eqn}
\phi(t) = \phi_a + \int_a^t G(\phi(s)) \dmea{s}.
\end{align}
Given an initial approximation $\phi^p$, an error function to measure the approximation is defined by
\begin{align}\label{error}
E(t,{\phi}^p) = \phi_a + \int_a^t G({\phi}^p(s)) \dmea{s} - {\phi}^p(t).
\end{align}
Define correction is $\delta(t) = \phi(t) - {\phi}^p(t)$, 
substituting $\phi(t) = {\phi}^p(t)+\delta(t)$ into \eqref{pre-eqn} and using \eqref{error}, we obtain the correction equation
\begin{align}\label{cor-eqn-1}
\delta(t) = \int_a^t \left( G({\phi}^p(s)+\delta(s)) - G({\phi}^p(s)) \right) \dmea{s} + E(t,{\phi}^p).
\end{align}
After using some numerical method to discretize the correction equation \eqref{cor-eqn-1}, and adding the correction $\delta(t)$ to the initial approximation $\phi^p(t)$, 
we can get higher-order approximated solution $\phi^c(t)$. 
An advantage of this method is that it is a one-step method and can be constructed easily and systematically for any order of accuracy.

\subsection{Subintervals and integral of the interpolant}
The SDC method focuses on a single time interval $[t_n,t_{n+1}]$. 
Given a set of $M$ Gauss-Lobatto quadrature nodes $t_n = \xi_1 < \dots < \xi_M = t_{n+1}$ (cf. \cite{she11}), 
we devide the time interval $[t_n,t_{n+1}]$ into a total of $M-1$ disjoint subintervals, i.e.,
$[t_n,t_{n+1}] = \bigcup_{i=1}^{M-1}[\xi_i,\xi_{i+1}]$.
Let $\Delta \xi_i = \xi_{i+1} - \xi_i$ denote the length of subinterval $[\xi_i,\xi_{i+1}]$.
For convenience, we use the notation $\phi_i = \phi(\xi_i)$.
The same principle also applies to approximations $\delta_i,\phi_i^p,\phi_i^c$.

To compute correction $\delta$ by approximating equation \eqref{cor-eqn-1}, 
the error function $E(t,\phi^p)$ in \eqref{error} must be approximated using numerical quadrature.
Since $\phi^p$ is known at $M$ Gauss-Lobatto quadrature nodes,
we can define the Lagrange interpolation operator $\Ci $ to be the projection onto the space of polynomials of degree at most $M - 1$ via
\begin{align}\label{lag-inter}
\Ci(G(\phi^p))(t) := \sum_{j=1}^M G(\phi^p_j) \ell_j(t),
\end{align}
where $\ell_j(t)$ is Lagrange interpolating basis polynomial corresponding to the spectral point $\xi_j$:
\begin{align*}
\ell_j(t) := \frac{1}{c_j} \prod_{k=1,k \neq j}^M (t-\xi_k), \quad   c_j = \prod_{k=1,k \neq j}^M (\xi_j-\xi_k).
\end{align*}
Then we have the integral of the Lagrange interpolant \eqref{lag-inter} over subinterval $\left[\xi_i,\xi_{i+1}\right]$ as follows.
\begin{align}\label{int-lag}
\int_{\xi_i}^{\xi_{i+1}} \Ci(G(\phi^p))(t) \dmea{t} =\sum_{j=1}^M \omega_{ij} G(\phi^p_j), 
\quad  
\omega_{ij} = \int_{\xi_i}^{\xi_{i+1}} \ell_j(t) \dmea{t},
\end{align}
where $\omega_{ij}$ is quadrature weight.
The coefficients $\omega_{ij}$ can be precomputed, and the quadrature is reduced to a simple matrix-vector multiplication.

\begin{remark}
Given one approximation ${\phi^p}$, the error estimate for the integral of interpolant 
relies on the regularity of solution $\phi$ and the choice of the quadrature rules.
In \cite{cau17}, the author found that Gauss-Lobatto quadrature nodes minimize the error constant and avoid the Runge phenomenon if a large number of quadrature nodes are chosen.
Another advantage of using Gauss-Lobatto nodes is that it contains interval endpoints $t_n$ and $t_{n+1}$, so do not need additional extrapolation.  
\end{remark}

\subsection{SDC method combined with the convex splitting method.}
Suppose we already have the initial numerical approximation $\phi_1^p$ at the left endpoint $\xi_1$.
By section \ref{sec-CS}, a convex splitting method for computing approximation $\phi^p$ to Picard equation \eqref{pre-eqn} is 
\begin{align}\label{pre-CS}
\phi^p_{i+1} = \phi^p_i + \Delta \xi_i \left( G_\text{im}(\phi^p_{i+1}) + G_\text{ex}(\phi^p_i) \right) \quad  i = 1,\dots,M-1,
\end{align}
where $G(\phi) := G_\text{im}(\phi) + G_\text{ex}(\phi)$ and implicit-explicit parts are defined by 
\begin{align}
G_\text{im}(\phi) := - \delta_\phi \Ce^c(\phi), \quad
G_\text{ex}(\phi) := \delta_\phi \Ce^e(\phi) + \beta(\phi). 
\end{align}
Then we focus on the correction process. 
Note that $\phi^c_i = \phi^p_i + \delta_i$.
To be more specific, we set $\phi^c_1 = \phi^p_1$ and $\delta_1 = 0$ as the initial value.
Discretizing the correction equation \eqref{cor-eqn-1} via the convex splitting method, we have
\begin{align*}
\phi^c_{i+1} 
= \phi^c_i + \Delta \xi_i \left( G_\text{im}(\phi^c_{i+1}) + G_\text{ex}(\phi^c_i)
- G_\text{im}(\phi^p_{i+1}) - G_\text{ex}(\phi^p_i)   \right) 
+ \int_{\xi_i}^{\xi_{i+1}} G(\phi^p)(t) \dmea{t}.
\end{align*}
Since the function $G(\phi^p)$ is only known at $M$ Gauss-Lobatto quadrature nodes, 
the last term of the above equation can be computed with the integral of interpolant \eqref{int-lag}. 
Then we get the correction approximation
\begin{align}\label{cor-CS}
\phi^c_{i+1} 
= \phi^c_i + \Delta \xi_i \left( G_\text{im}(\phi^c_{i+1}) + G_\text{ex}(\phi^c_i)
- G_\text{im}(\phi^p_{i+1}) - G_\text{ex}(\phi^p_i) \right) 
+  \int_{\xi_i}^{\xi_{i+1}} \Ci(G(\phi^p)) (t) \dmea{t}.
\end{align}

Iterated deferred correction proceeds by computing a new correction $\phi^c$ to the updated prediction $\phi^p$, and solving the correction equation \eqref{cor-CS} again to obtain a higher order approximation. 
For ease of identification, the SDC method using $M$ Gauss-Lobatto nodes and $K$ correction iterations will be denoted SDC$_M^K$. 
For given initial point approximation $\phi^n$, the SDC$_M^K$ algorithm generates $\phi^{n+1}$ as follows.
\begin{algorithm}[htb]
 \caption{$\phi^{n+1}=$SDC$_M^K(\phi^n)$}
 \begin{algorithmic}\label{alg1}
 \STATE {Set: $\phi_1^p\leftarrow\phi^n$, $\phi_1^c \leftarrow \phi_1^p$.}
 \FOR{{$i = 1:M-1$}}
 \STATE Solve prediction equation \eqref{pre-CS} to get ${\phi_{i+1}^p}$.
 \ENDFOR
 \FOR{$j=1:K$}
 \FOR{$i=1:M-1$}
 \STATE Solve correction equation \eqref{cor-CS} to get $\phi^c_{i+1}$.
 \ENDFOR
 \STATE  {Update the approximate solution: $\phi^p \leftarrow \phi^c$.}
 \ENDFOR
 \STATE {Return: $\phi^{n+1}\leftarrow\phi^c_M$.}
 \end{algorithmic}
\end{algorithm}

Under the assumptions of theorem \ref{th2.1}, 
we find that for  any $\phi^n \in \Hmper{2}$ with finite energy $\Ce(\phi^n)$ there exists a $S>0$ such that the scheme \eqref{pre-CS} is stable for any $\Delta \xi_i>0$ in the sense
\begin{align}\label{stable-pre}
\overline{\phi_{i+1}^p} = \overline{\phi^n},\quad \Nunicont{\phi_{i+1}^p} \le \Cc(\phi^n),\quad \Ce(\phi_{i+1}^p) \le \Ce(\phi_{i}^p)\quad i = 1,\dots, M-1.
\end{align}

By the definition of the operator $\Ci$ and using integration by parts, we have
\begin{align*}
\int_{\Omega} \int_{\xi_i}^{\xi_{i+1}}  \Ci(G(\phi^p))(t)  \dmea{t} \DX = 0.
\end{align*}
Taking $\Ltwo$ product of \eqref{cor-CS} with 1, and combining the above identity with \eqref{stable-pre} yield  
\begin{align}
\overline{\phi_{i+1}^c} = \overline{\phi^n}\quad i = 1,\dots,M-1,
\end{align}
which implies SDC method combined with the convex splitting method can preserve the mass precisely. 
 
\begin{remark}
The global order of accuracy for SDC$_M^K$ method is $\min\{M,K\}$ where $K$ is the number of correction iterations \cite{cau17,dut00}.
The stability properties of the SDC methods are examined in \cite{dut00,min03}.
In \cite{cau17}, the authors' proofs pointed to a total of three sources of errors that SDC methods carry: the error at the current time point, the error from the previous iterate, and the numerical integration error that comes from the total number of quadrature nodes used for integration.
It was found that the accuracy improvement of SDC method may affect the overall energy stability which is intrinsic to the phase field models \cite{fen15}.
\end{remark}

\subsection{Adaptive correction strategy.}

The correction number for SDC$_M^K$ algorithm is $K(M-1)$.
Using too many corrections costs too much time. 
Without using multiple corrections, accuracy may not be satisfactory. 
It reminds us to balance accuracy and stability.
Inspired by adaptive restart technique \cite{odo15} for accelerated gradient schemes, 
we provide an adaptive-SDC (ASDC$_M^K$) algorithm that makes some computationally cheap observation and decides whether or not to use correction based on that observation.

\begin{algorithm}[htb!]\label{alg2}
 \caption{$\phi^{n+1}=$ASDC$_M^K(\phi^n)$}
 \begin{algorithmic}
 \STATE {Set: $\phi_1^p\leftarrow\phi^n$, $\phi_1^c \leftarrow \phi_1^p$.}
 \FOR{{$i = 1:M-1$}}
 \STATE Solve prediction equation \eqref{pre-CS} to get ${\phi_{i+1}^p}$.
 \ENDFOR
 \STATE {$ k \leftarrow 1 $} 
 \FOR{$j=1:K$}
 \FOR{$i=k:M-1$}
 \STATE Solve correction equation \eqref{cor-CS} to get $\phi^c_{i+1}$.
 \IF{$\SP{\delta_\phi\Ce^c(\phi^c_{i+1})-\delta_\phi\Ce^e({\phi^c_i})}{\phi_{i+1}^c-\phi_i^c}<0$}
 \STATE {$ \phi_{i}^c \leftarrow \phi_{i+1}^c $ }
 \STATE {$ k \leftarrow i $ }
 \ENDIF
 \ENDFOR
 \STATE  {Update the approximate solution: $\phi^p \leftarrow \phi^c$.}
 \ENDFOR
 \STATE {Return: $\phi^{n+1}\leftarrow\phi^c_M$.}
 \end{algorithmic}
\end{algorithm}

Note that we use a convex-concave argument to control the correction number. 
Because variable index $k$ satisfies $k < {M}$, we don't change the correction times of the approximation on the $M$-th Gauss-Lobatto point.
If the the argument $\SP{\delta_\phi\Ce^c(\phi^c_{i+1})-\delta_\phi\Ce^e({\phi^c_i})}{\phi_{i+1}^c-\phi_i^c}<0$ satisfies, 
using the convex-concave argument \eqref{CS-argue} yields $\Ce(\phi_{i+1}^c) \le \Ce(\phi_{i}^c).$ 
Frome \eqref{stable-pre}, we find that 
\begin{align*}
\Ce(\phi_{i+1}^c) \le \Ce(\phi^n).
\end{align*}
By the same argument in Lemma \ref{lm2.1}, we have the following $\Unicont$-bound of $\phi_{i+1}^c$:
\begin{align*} 
\Nunicont{\phi_{i+1}^c} \le \Cc(\phi^n). 
\end{align*}
We can reset the $ i$-th approximation by $\phi_{i+1}^c$ and adjust the initial position $ k $  for the next correction loop. 
In other words, the energy-decreasing property will be preserved and fewer corrections are required for stable solutions. 
Otherwise, using more corrections generates approximations until we observe stable solutions.
Therefore, we get a mass conservative and energy-stable spectral deferred correction method for the AC-LB equation.

\section{Spatial discretization}\label{sec4}
The purpose of this section is to construct 
Fourier pseudo-spectral method 
for the LB model.
Without loss of generality, we denote the rectangular domain $ \Omega $  as $\Pi_{j=1}^d[0,L_j] \subset \bbR^d$. 
For a given positive even integer $ N $, we denote discretized gridpoint space as
\[ \Cp = \SET{ \Bx:=(x_1,x_2,\dots,x_d)\in\bbR^d: x_j = nL_j/N, n=1,2,\dots,N, j=1,2,\dots,d }.\] 
Define the discrete Fourier spectral space 
\[ \Ck = \SET{ \Bk:=(k_1,k_2,\dots,k_d)\in\bbZ^d: \SN{k_j} \le N/2, j=1,2,\dots,d}. \]
Next, we consider the discrete Fourier transform (DFT) of $\phi$ on the gridpoint $\Bx\in\Cp$ and its inverse as
\begin{align}\label{DFT}
\wh{\phi}(\Bk) =   \sum_{\Bx\in\Cp} \phi(\Bx) e^{-i2\pi(\BB\Bk)\cdot\Bx},\quad 
\phi(\Bx) = \frac{1}{N^d}  \sum_{\Bk\in\Ck} \wh{\phi}(\Bk) e^{i2\pi (\BB\Bk)\cdot\Bx},
\end{align}
where $\BB = \diag{L_1^{-1},L_2^{-1},\dots,L_d^{-1}}\in\bbR^{d \times d}$ is the scaling matrix.
We assume that $ \phi(\Bx)  $  is sufficiently smooth. 
The following equation shows that the differentiation works with DFT. 
\begin{align*}
\frac{\partial}{\partial x_j} \phi(\Bx) = \sum_{k\in\Ck} \left(i \frac{2\pi k_j}{L_j}\right)  \wh{\phi}(\Bk) e^{i2\pi(\BB\Bk)\cdot\Bx} .  
\end{align*}
Therefore we can represent the Laplacian to coefficients in the discrete Fourier space as follow:
\begin{align*}
\Delta \phi(\Bx) = \sum_{k\in\Ck} -2\pi\SN{\BB\Bk}^2 \wh{\phi}(\Bk) e^{i2\pi(\BB\Bk)\cdot\Bx}, 
\end{align*}
where $ \SN{\BB\Bk}:=(\BB\Bk\cdot\BB\Bk)^{1/2} $ is the Euclidean norm of $ \BB\Bk \in \bbR^d$. 
Thus we transform convex splitting scheme \eqref{CS} into discrete Fourier space as follows:
For the given data $\phi^n(\Bx),\forall\Bx\in\Cp$, 
find $\wh{\phi^{n+1}}(\Bk),\forall\Bk\in\Ck$ such that 
\begin{align*}
\wh{\phi^{n+1}}(\Bk) = \frac{\wh{\phi^n}(\Bk)+\Delta{t}\left(\wh{\delta_\phi\Ce^e({\phi^n})}(\Bk) + \beta(\phi^n) \right)}{1+\Delta{t}\left(S-\alpha+(1-2\pi\SN{\BB\Bk}^2)\right)}.  
\end{align*}
A direct evaluation of the nonlinear term $\wh{\delta_\phi\Ce^e(\phi^n)}(\Bk)$ is extremely expensive. 
Thanks that ${\delta_\phi\Ce^e(\phi^n)}$ is a simple multiplication in the $d$-dimensional real space. 
The pseudo-spectral method takes the advantage of this observation by evaluating   $\wh{\delta_\phi\Ce^e(\phi^n)}(\Bk)$ in the real space via the Fast Fourier Transformation algorithm. 
As a result, it provides an efficient technique to reduce the computation cost.
Then, the updated numerical solution's data $ \phi^{n+1}(\Bx),\forall\Bx\in\Cp $ can be computed using inverse DFT in \eqref{DFT}.  
Similarly, approximations \eqref{pre-CS} and \eqref{cor-CS} can also be transformed into discrete Fourier space. 

Discretize the energy functional of LB model \eqref{LB-energy} with spectral derivative, it yields
\begin{align}\label{DFT-energy}
\Ce_\Cp(\phi) = \frac{\SN{\Omega}}{N^{2d}}  \sum_{\Bk\in\Ck} \frac{(1-2\pi\SN{\BB\Bk}^2)^2-\alpha}{2} \SN{\wh{\phi}(\Bk)}^2 
+ \frac{\SN{\Omega}}{N^d} \sum_{\Bx\in\Cp} \left(\frac{1}{4!} \phi(\Bx)^4 - \frac{\gamma}{3!} \phi(\Bx)^3\right), 
\end{align}
since the following discrete Parseval’s identity can be applied,
\begin{align}\label{parseval}
\sum_{\Bx\in\Cp} \phi(\Bx)^2 = \frac{1}{N^d} \sum_{\Bk\in\Ck} \SN{\wh{\phi}(\Bk)}^2.
\end{align}
The computation of the DFT and spectral derivatives can be accomplished by the Fast Fourier Transform to reduce floating point operations.

\section{Numerical experiments} \label{sec5}
Now we carry out the numerical experiments for the LB model to demonstrate the performance of the proposed method.
All experiments were performed on a workstation with a 2.90 GHz CPU (intel Xeon Gold 6326, 16 processors). 
All codes were written in MATLAB language without parallel implementation.

\begin{example}\label{ex1}
We first examine the convergence of the SDC method. 
The problem's setting is the following:
\begin{equation*}
\Omega=\left[0,{16\pi}/{\sqrt{3}}\right]\times\left[0,8\pi\right],\quad \alpha=0.15,\quad \gamma=0.25,\quad 
  S=2.
\end{equation*}
To verify the convergence rate, we add a source term to the AC-LB equation such that the exact solution is
\begin{equation*}
\phi(t,x,y)=e^{-2t}\sin{\sqrt{3}x}\sin{y}
\end{equation*}
We use the DFT in Section \ref{sec4} for the spatial discretization and SDC$_M^K$ algorithm with Legendre-Gauss-Lobatto quadrature points for the time discretization.
We compute approximation $\phi_\Cp$ on a spatial grid $\Cp$ with $N=512$ and take a rough step size $\Delta{t}=0.05$.
Table \ref{tab1} shows the $ \Ltwo $ norm error and $ \Unicont $ norm error at $T=4$ with $M=4$ and $K=1,2,3,4$ respectively.
Optimal convergence rates are obtained for the SDC method concerning the step size.

\begin{table}[!htbp]
\centering
\caption{
Convergence rates against the number of corrections $K$ in SDC$_M^K$ algorithm.
}
\label{tab1}
\vspace{10pt}
\begin{tabular}{|c|c|c|c|c|c|}
\hline
           Method                     &                      Error & $\Delta{t}=$0.05 & $\Delta{t}/2$ & $\Delta{t}/4$ & $\Delta{t}/8$ \\ \hline
           \multirow{4}{*}{SDC$_4^1$} &  $\NLtwo{\phi_\Cp - \phi}$ & 1.7949e-05       &    5.4847e-06 & 1.5304e-06    &    4.0540e-07 \\
                                      &                      Order & --               &        1.7104 & 1.8415        &        1.9165 \\
                                      & $\Nunicont{\phi_\Cp-\phi}$ & 1.3292e-06       &    4.0618e-07 & 1.1334e-07    &    3.0022e-08 \\
                                      &                      Order & 0                &        1.7104 & 1.8415        &        1.9166 \\ \hline
           \multirow{4}{*}{SDC$_4^2$} &  $\NLtwo{\phi_\Cp - \phi}$ & 1.5222e-06       &    2.8287e-07 & 4.3481e-08    &    6.0440e-09 \\
                                      &                      Order & --               &        2.4279 & 2.7017        &        2.8468 \\
                                      & $\Nunicont{\phi_\Cp-\phi}$ & 1.1273e-07       &    2.0948e-08 & 3.2201e-09    &    4.4759e-10 \\
                                      &                      Order & --               &        2.4280 & 2.7016        &        2.8469 \\ \hline
           \multirow{4}{*}{SDC$_4^3$} &  $\NLtwo{\phi_\Cp - \phi}$ & 1.2966e-07       &    1.4946e-08 & 1.2704e-09    &    9.2766e-11 \\
                                      &                      Order & --               &        3.1169 & 3.5564        &        3.7755 \\
                                      & $\Nunicont{\phi_\Cp-\phi}$ & 9.6019e-09       &    1.1069e-09 & 9.4084e-11    &    6.8706e-12 \\
                                      &                      Order & --               &        3.1168 & 3.5564        &        3.7754 \\ \hline
           \multirow{4}{*}{SDC$_4^4$} &  $\NLtwo{\phi_\Cp - \phi}$ & 1.0856e-08       &    8.0012e-10 & 3.7792e-11    &    1.4514e-12 \\
                                      &                      Order & --               &        3.7621 & 4.4041        &        4.7026 \\
                                      & $\Nunicont{\phi_\Cp-\phi}$ & 8.0395e-10       &    5.9255e-11 & 2.7994e-12    &    1.0810e-13 \\
                                      &                      Order & --               &        3.7621 & 4.4037        &        4.6947 \\ \hline
\end{tabular}
\end{table}
\end{example}

\begin{example}\label{ex2}
This example is to verify the mass conservative and energy stable properties of the SDC method. 
We use two-dimensional periodic crystals of lamellar phase and cylindrical phase,
as show in Figure \ref{fig1}, to demonstrate the performance of SDC$ _M^K $ algorithm.
The computational domain is $ \Omega=\left[0,{16\pi}/{\sqrt{3}} \right]\times\left[0,8\pi\right] \subset \bbR^2$. 
The parameters of LB model \eqref{LB-energy} are set as $ \alpha=0.15,\gamma=0.25 $.
The initial approximation to those phases which can be found in \cite{shi03} is chosen as  
\begin{align*}
\phi^0(\Bx) = 2 a_1 \cos(\BG_1\cdot\Bx) + 2 a_2 \left(\cos(\BG_2\cdot\Bx) + \cos(\BG_3\cdot\Bx)\right)\quad \forall\Bx\in\Cp,
\end{align*}
where the $\BG_i$ are given by
\begin{align*}
\BG_1=(0,1),\quad \BG_2 =  (-\sqrt{3}/2,1/2),\quad \BG_3 =  (-\sqrt{3}/2,-1/2).
\end{align*}
The lamellar phase is described by $a_1=\sqrt{2\alpha},a_2=0$, 
and the cylindrical phase is described by $a_1= a_2=(\gamma+\sqrt{\gamma^2+10\alpha})/5$.
Note that the initial phases satisfy $ \overline{\phi^0}=0 $. 

The spatial grip $\Cp$ is fixed with $N=512$. 
We take $M=4$ and change $K=1,2,3,4$ to implement SDC$_M^K$ algorithm.
We choose the positive constant $S=2$ to allow a pretty large step size $\Delta{t}=1$.
To show the energy dissipation obviously, we calculate a reference energy $\Ce_s$ by choosing the
invariant energy value as the grid size converges to $0$.
From our numerical tests, the reference energy has $14$ significant decimal digits.
The reference energy values of lamellar and cylindrical phases are $-16.532074091947$ and $-17.324103376071$ respectively.
Figure \ref{fig1} (a) and (b) show the stationary solutions of the lamellar and cylindrical phases respectively.

\begin{figure}[!htbp]
\centering
\subfigure[Lamellar phase]{
\includegraphics[width=2.5in]{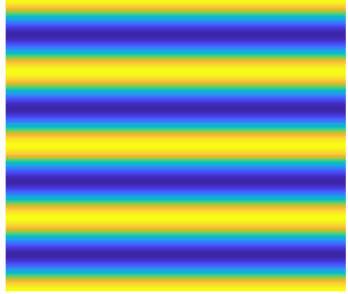}}
\subfigure[Cylindrical phase]{
\includegraphics[width=2.5in]{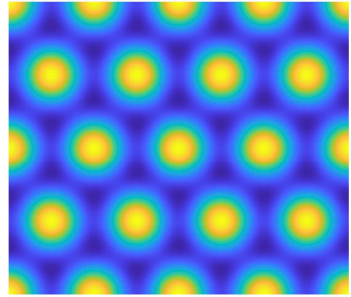}}
\caption{The two-dimensional periodic crystals in LB model with $ \alpha = 0.15, \gamma = 0.25 $.}
\label{fig1}
\end{figure}

Figure \ref{fig2} gives iteration process of SDC$_M^K$ algorithm for the lamellar phase, 
including the energy difference and the average mass during iterations.
We do numerical experiments when Legendre-Gauss-Lobatto, 
and Chebyshev-Gauss-Lobatto quadrature nodes are used to construct the polynomial interpolant \eqref{lag-inter}.
It is observed that SDC$_M^K$ algorithm has energy dissipative and mass conservative properties no matter what kind of quadrature points we use.  
The numerical behavior of SDC$_M^K$ algorithm for the cylindrical phase can be found in Figure \ref{fig3}.
We find again that our proposed approaches are mass conservative and energy stable.

For SDC$_M^K$ algorithm, an obvious observation is that the rate of energy difference descent doesn't change when correction number $K$ increases.
Therefore, the balance between efficiency and accuracy should be considered when using the SDC method.
In addition, we find that the mass curve was slightly disturbed during the iteration.
\begin{figure}[!htbp]
\centering
\subfigure[Energy difference during iterations]{
\includegraphics[width=2.5in]{./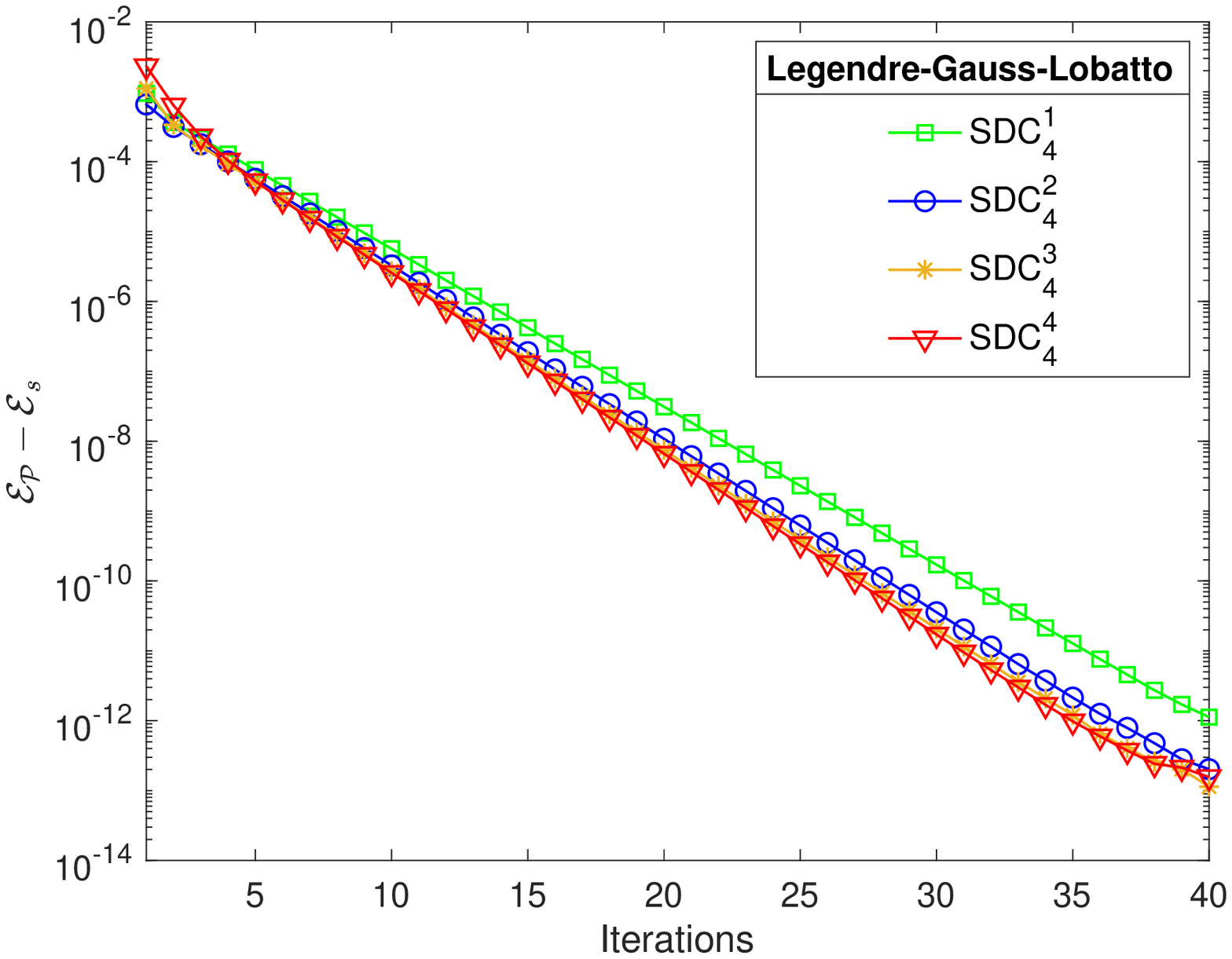}}
\subfigure[Average mass during iterations]{
\includegraphics[width=2.5in]{./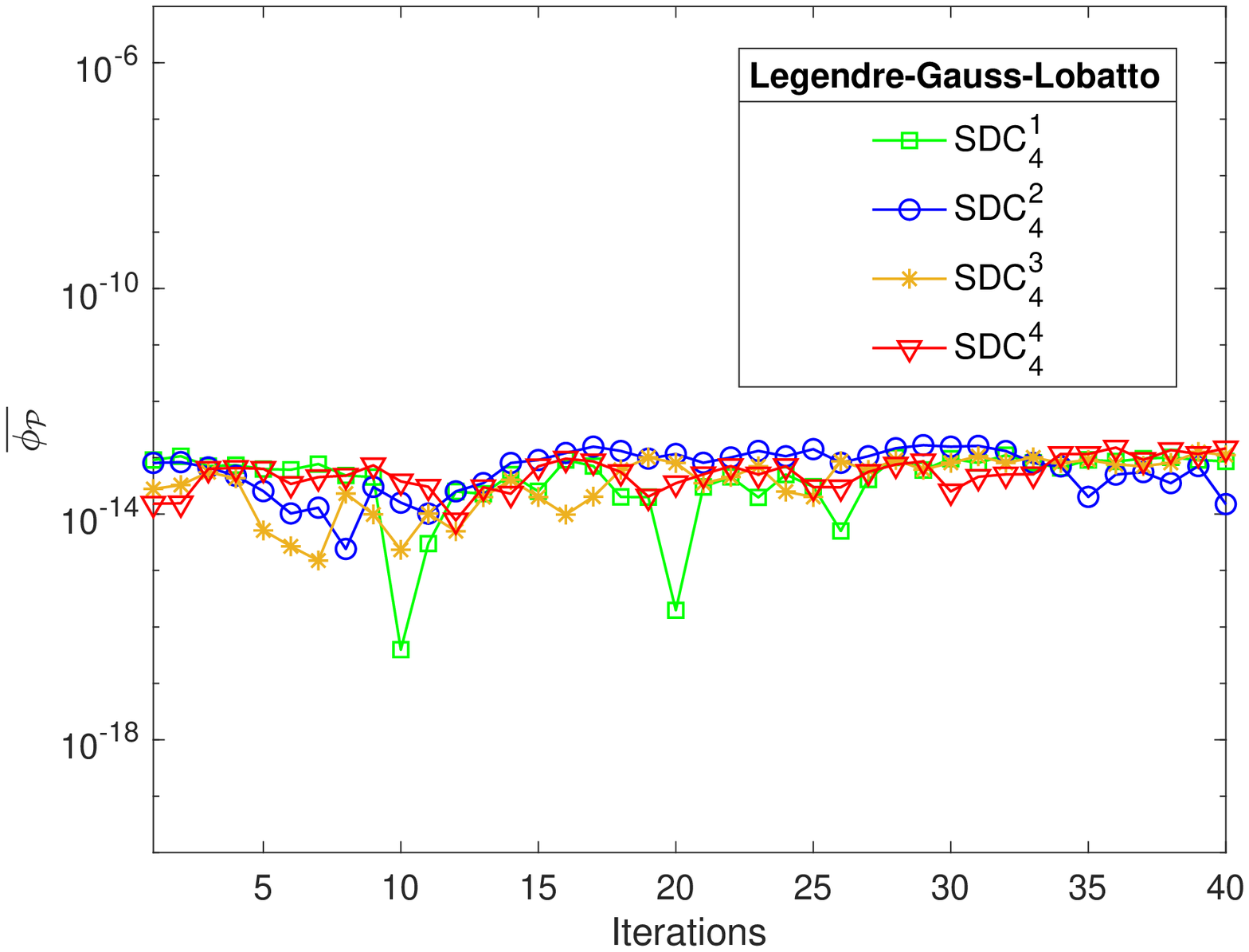}}
\subfigure[Energy difference during iterations]{
\includegraphics[width=2.5in]{./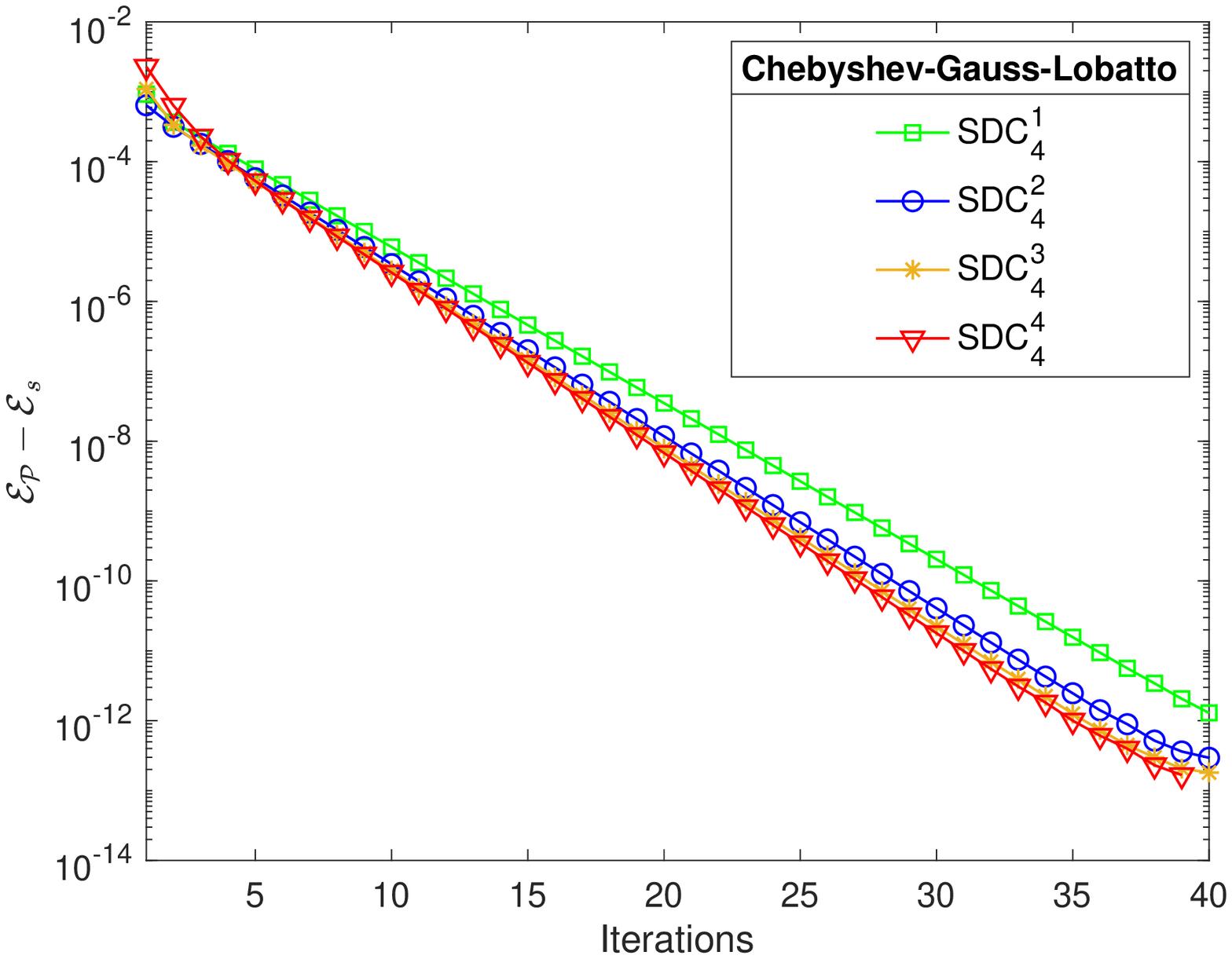}}
\subfigure[Average mass during iterations]{
\includegraphics[width=2.5in]{./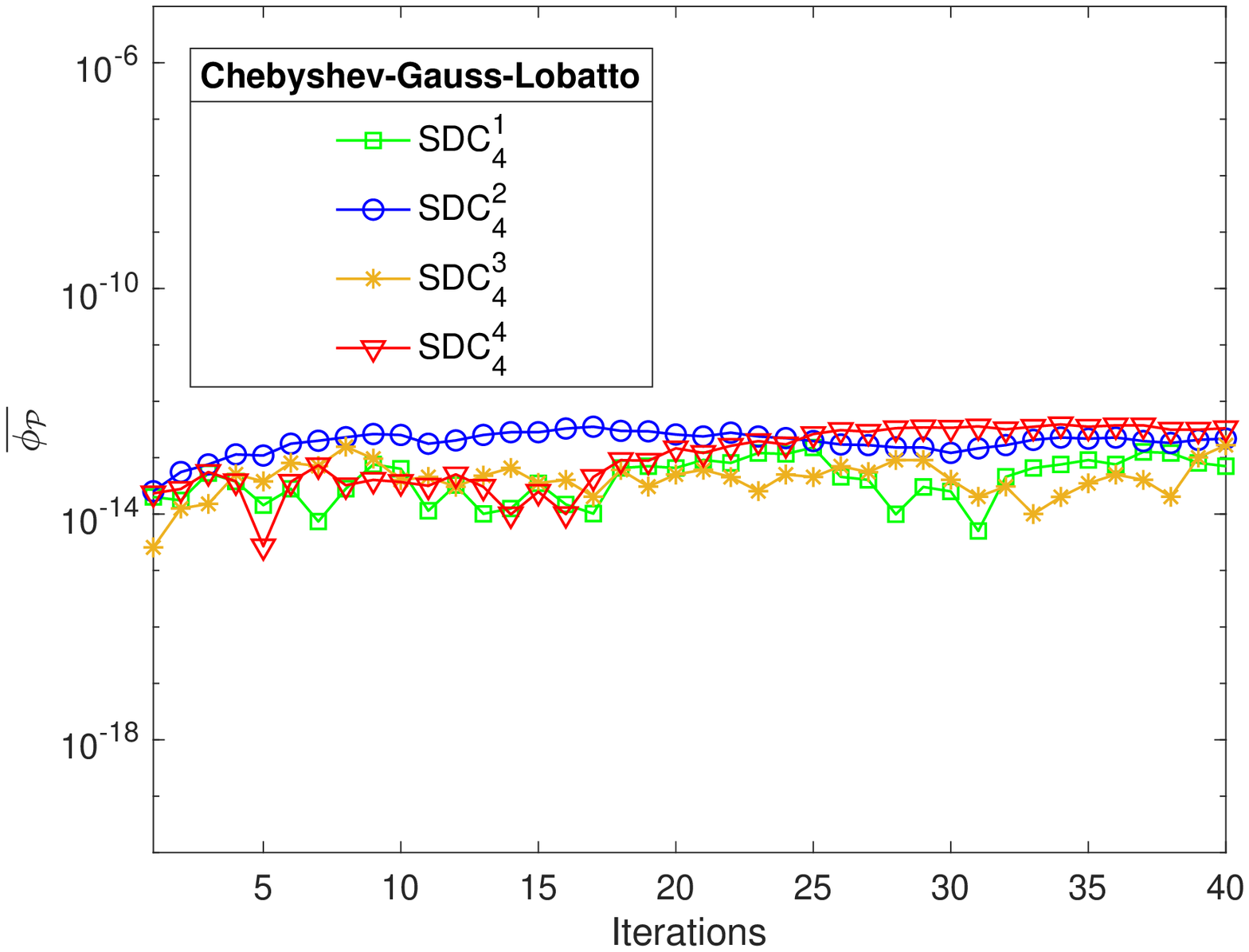}}
\caption{Lamellar phase: Numerical behavior of SDC$_M^K$ algorithm with $\Delta{t}=1$; 
\textit{First row:} Legendre-Gauss-Lobatto quadrature nodes; \textit{Second row:} Chebyshev-Gauss-Lobatto quadrature nodes.} 
\label{fig2}
\end{figure}

\begin{figure}[!htbp]
\centering
\subfigure[Energy difference during iterations]{
\includegraphics[width=2.5in]{./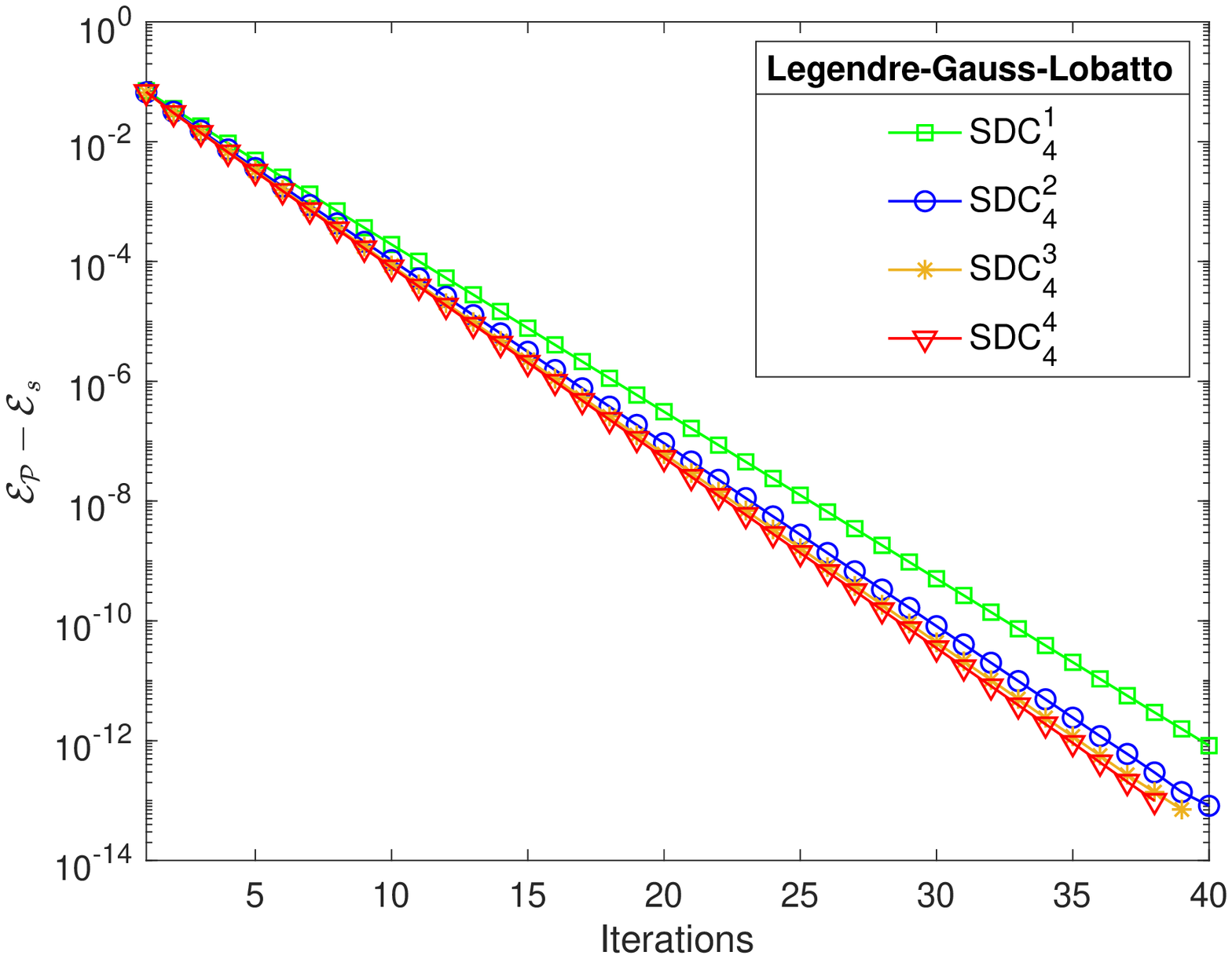}}
\subfigure[Average mass during iterations]{
\includegraphics[width=2.5in]{./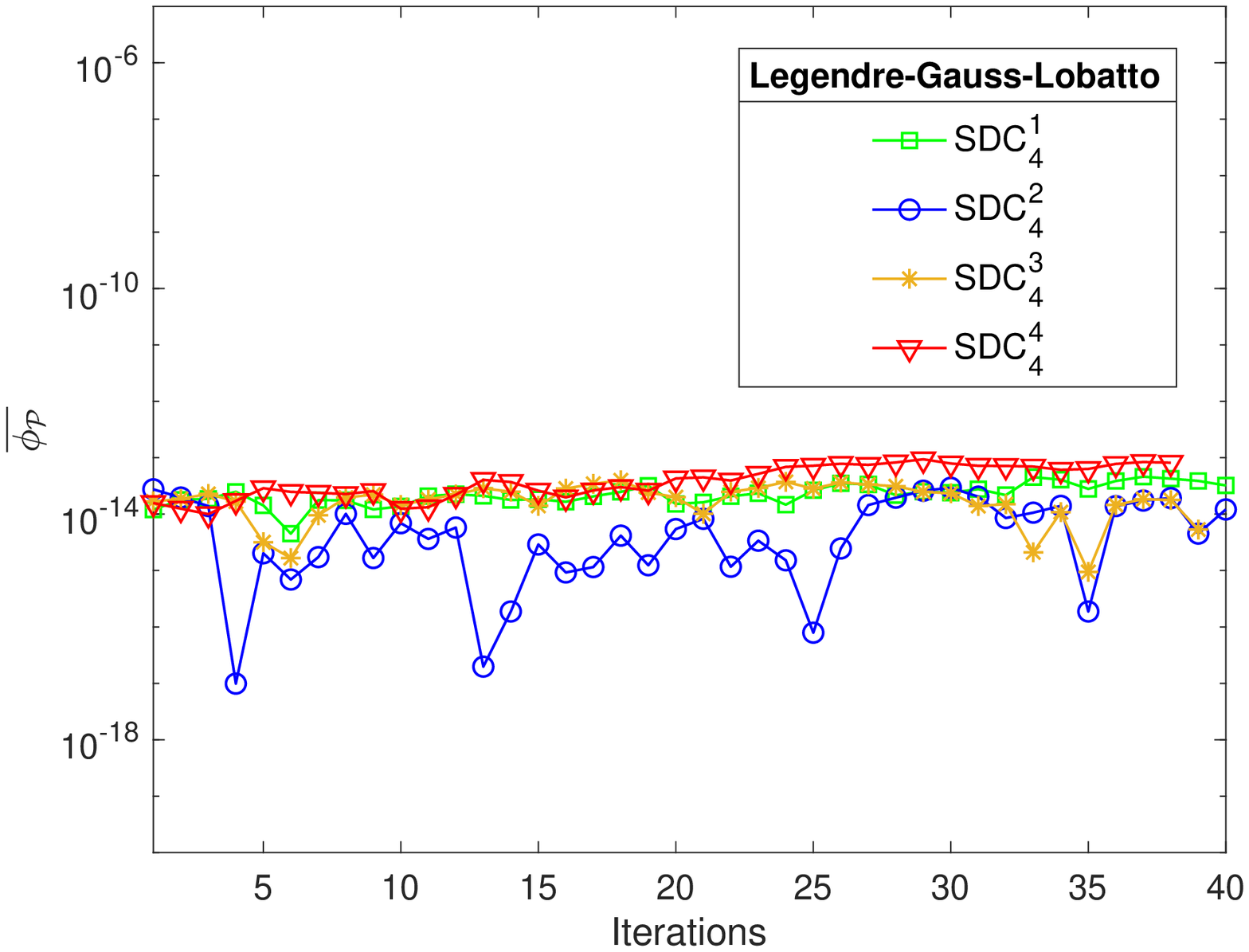}}
\subfigure[Energy difference during iterations]{
\includegraphics[width=2.5in]{./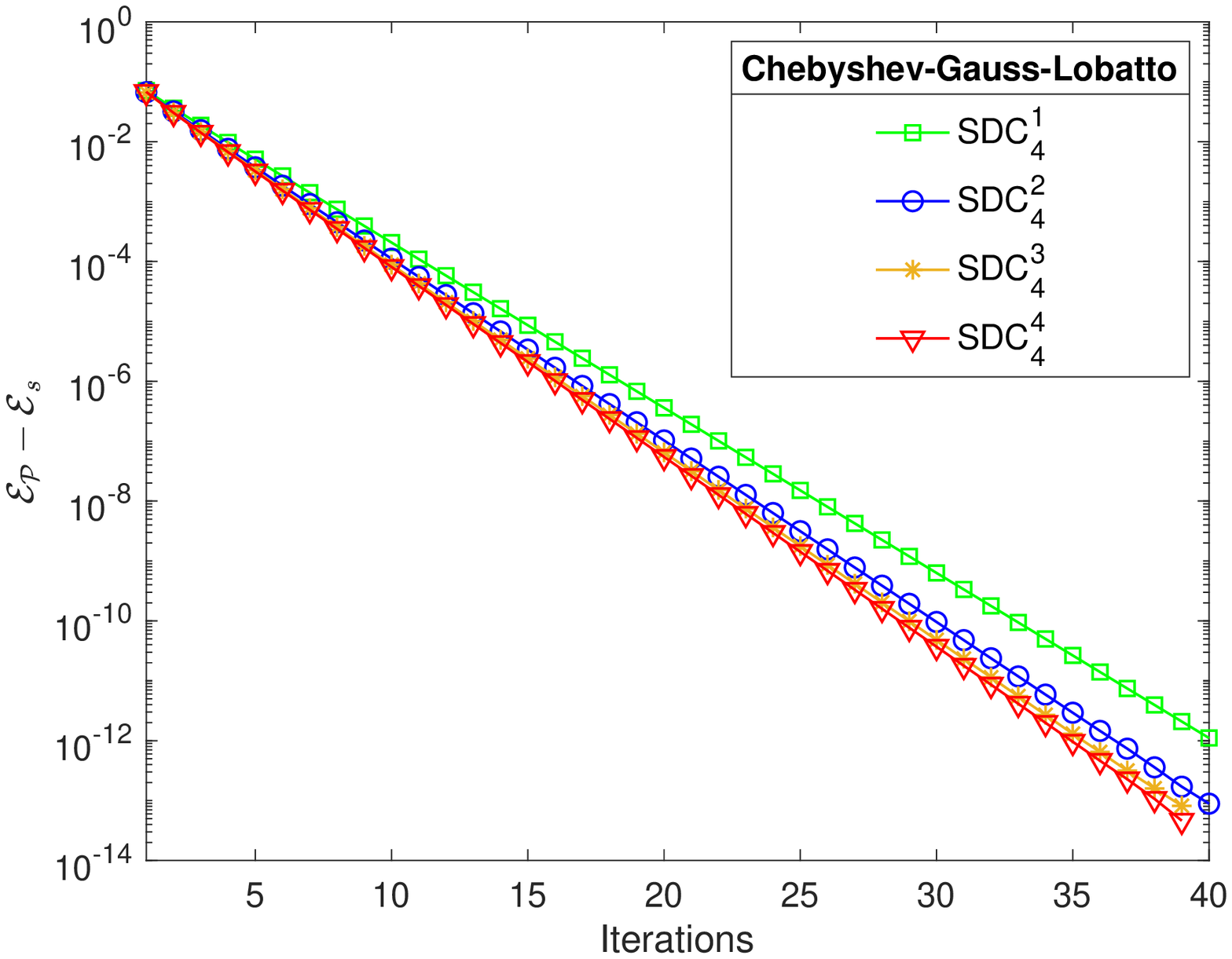}}
\subfigure[Average mass during iterations]{
\includegraphics[width=2.5in]{./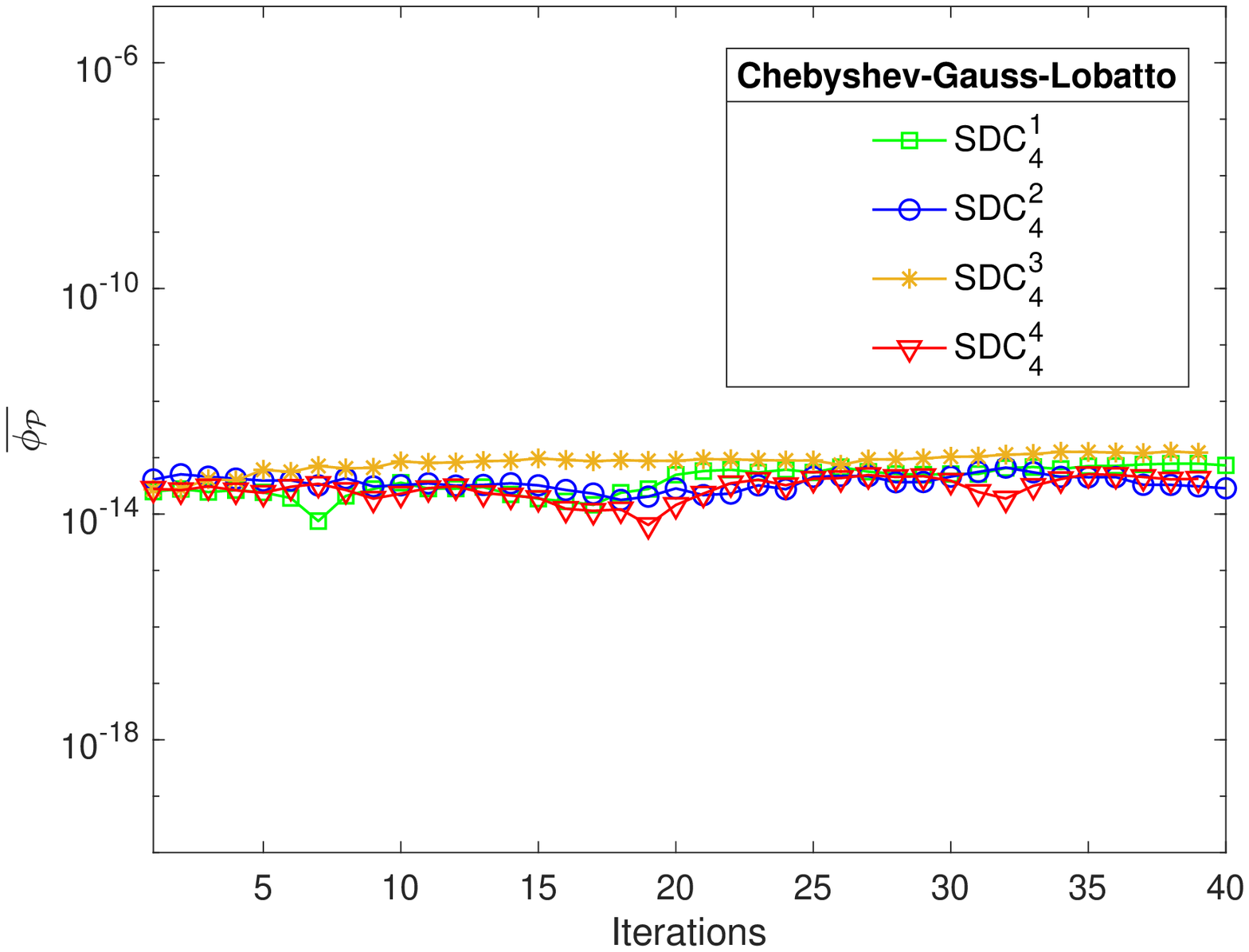}}
\caption{Cylindrical phase: Numerical behavior of SDC$_M^K$ algorithm with $\Delta{t}=1$; 
\textit{First row:} Legendre-Gauss-Lobatto quadrature nodes; \textit{Second row:} Chebyshev-Gauss-Lobatto quadrature nodes.} 
\label{fig3}
\end{figure}
\end{example}

\begin{example}\label{ex3}
The purpose of this example is to investigate the performance of ASDC$_M^K$ algorithm by the two-dimensional periodic crystals in Example \ref{ex2}.
The setting for this example is same as that for Example 2 excluding the initial correction number $K$. 
For computing a stationary solution, we stop iteration when the following criteria is met:
\begin{equation}\label{stop}
  \Ce_\Cp - \Ce_s \le \varepsilon,
 \end{equation}
where $\varepsilon>0$.

We set $\varepsilon=10^{-12}$ and change $K=2,3,4,5$ to implement ASDC$_M^K$ algorithm.
To compare the efficiency with SDC$_M^K$ algorithm, 
let $ N_\text{correction} $ denote the average number of times the correction equation \eqref{cor-CS} was solved.
For computing the lamellar phase, 
Table \ref{tab2} shows the average correction number $ N_\text{correction} $, the total iteration number $ N_\text{iteration} $, and the final energy difference $\Ce_p-\Ce_s$.
Clearly, for ASDC$_M^K$ algorithm both $ N_\text{iteration} $ and $ N_\text{correction} $ are less than SDC$_M^K$ algorithm whenever Legendre-Gauss-Lobatto 
or Chebyshev-Gauss-Lobatto quadrature points are used.
Moreover, $ N_\text{iteration} $ of ASDC$_M^K$ algorithm decreases with the increase of the initial correction number $K$, 
while $ N_\text{iteration} $ of SDC$_M^K$ increases. 
Especially, when $K=5$, ASDC$_M^K$ algorithm only needs less than half of the iterations of SDC$_M^K$ algorithm.
Therefore, ASDC$_M^K$ algorithm is more efficient than SDC$_M^K$ algorithm.
Again, as shown in Table \ref{tab3}, ASDC$_M^K$ algorithm demonstrates a better performance over SDC$_M^K$ algorithm in computing the cylindrical phase.

\begin{table}[!htbp]
\centering
\caption{Numerical results of SDC$_M^K$ and ASDC$_M^K$ algorithms with $\Delta{t}=1,M=4$ for computing the lamellar phase.}
\label{tab2}
\vspace{10pt}
\begin{tabular}{|c|c|c|c|c|}
\hline

\multirow{3}{*}{$K$} & \multicolumn{4}{c|}{$N_\text{iteration}(N_\text{correction})$} \\ \cline{2-5}

  & \multicolumn{2}{c|}{Legendre-Gauss-Lobatto} & \multicolumn{2}{c|}{Chebyshev-Gauss-Lobatto} \\ \cline{2-5}
  &                                   SDC$_M^K$ & ASDC$_M^K$                                    &   SDC$_M^K$ & ASDC$_M^K$           \\ \hline
2 &                                      37 (6) & 32 (5)                                        &      37 (6) & 33 (5)               \\ \hline
3 &                                      36 (9) & 27 (7)                                        &      36 (9) & 28 (7)               \\ \hline
4 &                                 \;\;35 (12) & 23 (9)                                        & \;\;35 (12) & 25 (9)               \\ \hline
5 &                                 \;\;41 (15) & \;\;\textbf{21} (11)                          & \;\;45 (15) & \;\;\textbf{22} (11) \\ \hline

\multirow{3}{*}{$K$}& \multicolumn{4}{c|}{${\Ce_\Cp-\Ce_s}$} \\ \cline{2-5}

  & \multicolumn{2}{c|}{Legendre-Gauss-Lobatto} & \multicolumn{2}{c|}{Chebyshev-Gauss-Lobatto} \\ \cline{2-5}
  &                                   SDC$_M^K$ & ASDC$_M^K$                                    &   SDC$_M^K$ & ASDC$_M^K$ \\ \hline
2 &                                  9.0239e-13 & 9.5923e-14                                    &  7.5673e-13 & 7.1054e-14 \\ \hline
3 &                                  6.6080e-13 & 9.5923e-14                                    &  8.2778e-13 & 8.1712e-14 \\ \hline
4 &                                  7.5673e-13 & 5.6843e-14                                    &  9.8055e-13 & 9.9476e-14 \\ \hline
5 &                                  8.1357e-13 & 6.0396e-14                                    &  8.2423e-13 & 7.1054e-14 \\ \hline

\end{tabular}
\end{table}

\begin{table}[!htbp]
\centering
\caption{Numerical results of SDC$_M^K$ and ASDC$_M^K$ algorithms with $\Delta{t}=1,M=4$ for computing the cylindrical phase.}
\label{tab3}
\vspace{10pt}
\begin{tabular}{|c|c|c|c|c|}
\hline

 \multirow{3}{*}{$K$} & \multicolumn{4}{c|}{$N_\text{iteration}(N_\text{correction})$} \\ \cline{2-5}
 
  & \multicolumn{2}{c|}{Legendre-Gauss-Lobatto} & \multicolumn{2}{c|}{Chebyshev-Gauss-Lobatto} \\ \cline{2-5}
  &                                   SDC$_M^K$ & ASDC$_M^K$                                    &   SDC$_M^K$ & ASDC$_M^K$           \\ \hline
2 &                                      37 (6) & 32 (5)                                        &      37 (6) & 33 (5)               \\ \hline
3 &                                      36 (9) & 27 (7)                                        &      36 (9) & 28 (7)               \\ \hline
4 &                                 \;\;35 (12) & 23 (9)                                        & \;\;35 (12) & 24 (9)               \\ \hline
5 &                                 \;\;38 (15) & \;\;\textbf{21} (11)                          & \;\;41 (15) & \;\;\textbf{21} (11) \\ \hline

\multirow{3}{*}{$K$}& \multicolumn{4}{c|}{${\Ce_\Cp-\Ce_s}$} \\ \cline{2-5}

  & \multicolumn{2}{c|}{Legendre-Gauss-Lobatto} & \multicolumn{2}{c|}{Chebyshev-Gauss-Lobatto} \\ \cline{2-5}
  &                                   SDC$_M^K$ & ASDC$_M^K$                                    &   SDC$_M^K$ & ASDC$_M^K$ \\ \hline
2 &                                  8.1712e-14 & 9.5923e-14                                    &  8.5265e-14 & 5.3291e-14 \\ \hline
3 &                                  6.7502e-14 & 9.5923e-14                                    &  7.8160e-14 & 7.4607e-14 \\ \hline
4 &                                  4.9738e-14 & 5.6943e-14                                    &  4.9738e-14 & 7.8160e-14 \\ \hline
5 &                                  7.4607e-14 & 6.0396e-14                                    &  8.5265e-14 & 8.8818e-14 \\ \hline
\end{tabular}
\end{table}
\end{example}

\begin{example}\label{ex4}
In this example, we use three-dimensional periodic crystals of the A15 phase, the body-centered cubic (BCC) phase, the face-centered-cubic (FCC) phase, and the double gyroid (GYR) phase,
to test the robustness of the parameters in LB model \eqref{LB-energy}.
The A15 phase is a cubic phase with two nonequivalent types of lattice sites: one whose atoms sit at the edges and center of the conventional unit cell, and one whose atoms are placed along lines subdividing the cubic faces into two congruent parts \cite{sin72}.
The BCC phase has one lattice point in the center of the unit cell in addition to the eight corner points \cite{wol85}.
The FCC phase has lattice points on the faces of the cube, each giving exactly one-half contribution, in addition to the corner lattice points, giving a total of 4 lattice points per unit cell \cite{wol85}.
The GYR phase is a continuous network periodic phase \cite{shi99}.
Those phases are shown in Figure \ref{fig4}.

\begin{figure}[!htbp]
\centering
\subfigure[A15]{
\includegraphics[width=2.5in]{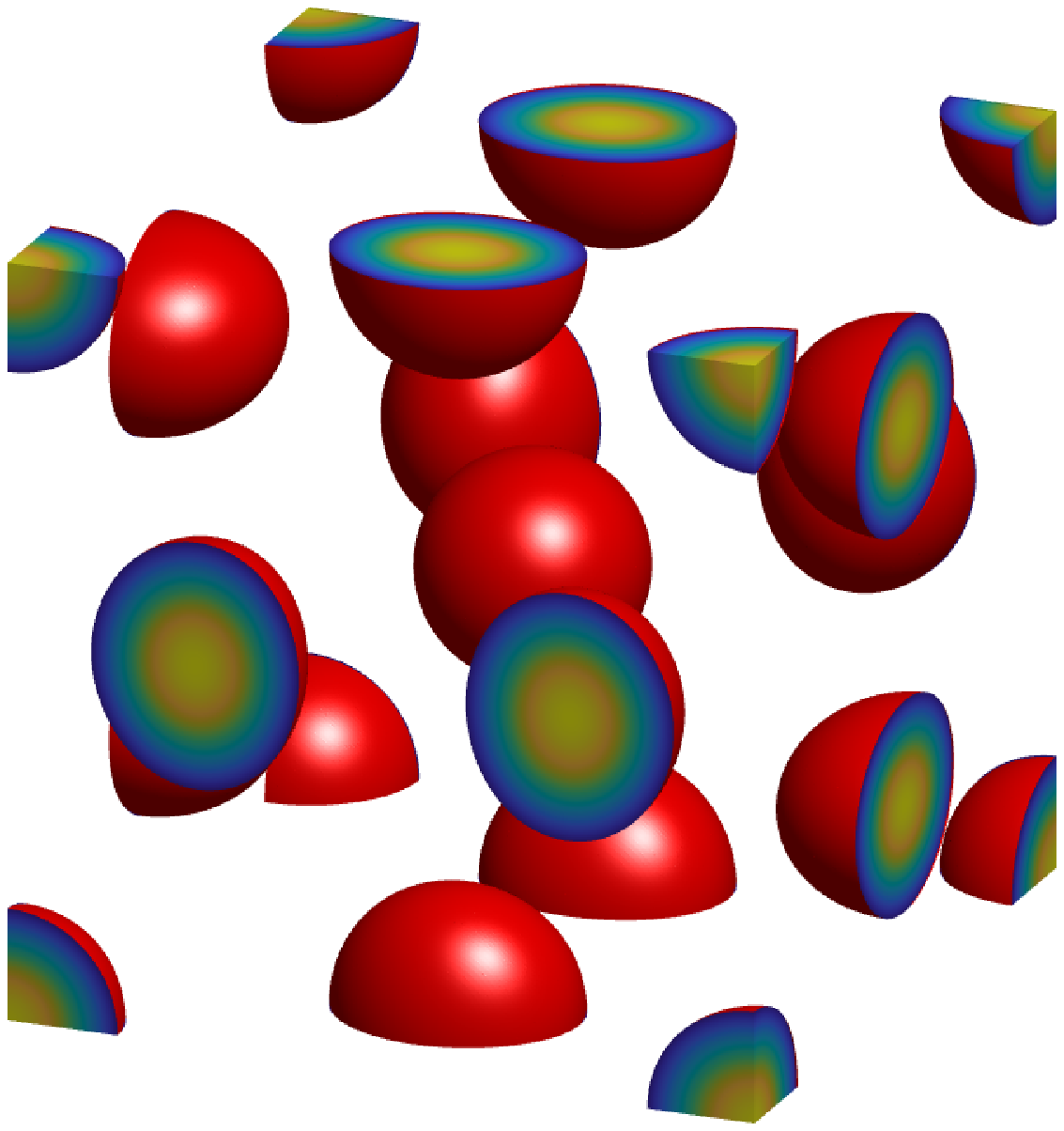}}
\subfigure[BCC]{
\includegraphics[width=2.5in]{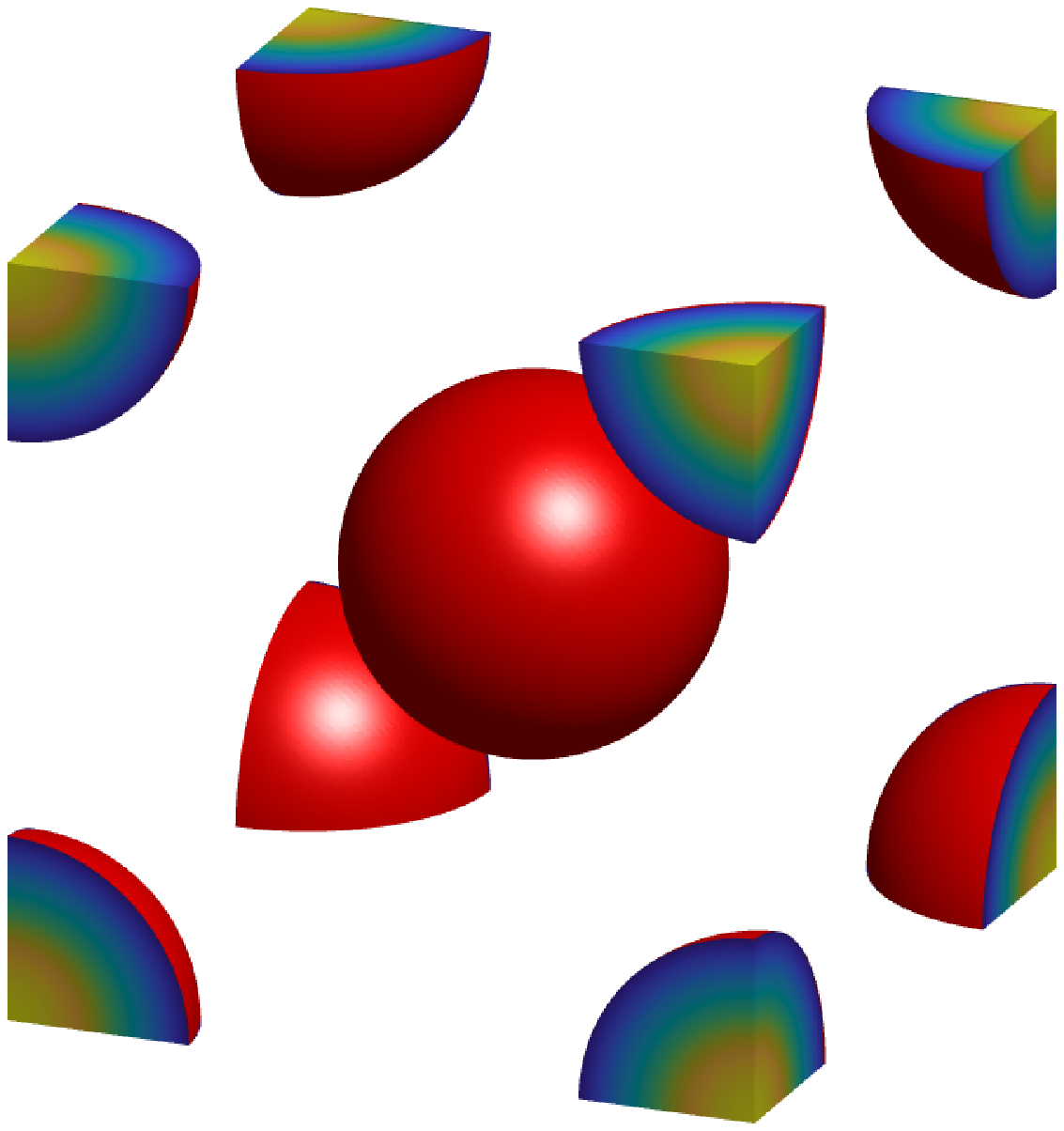}}
\subfigure[FCC]{
\includegraphics[width=2.5in]{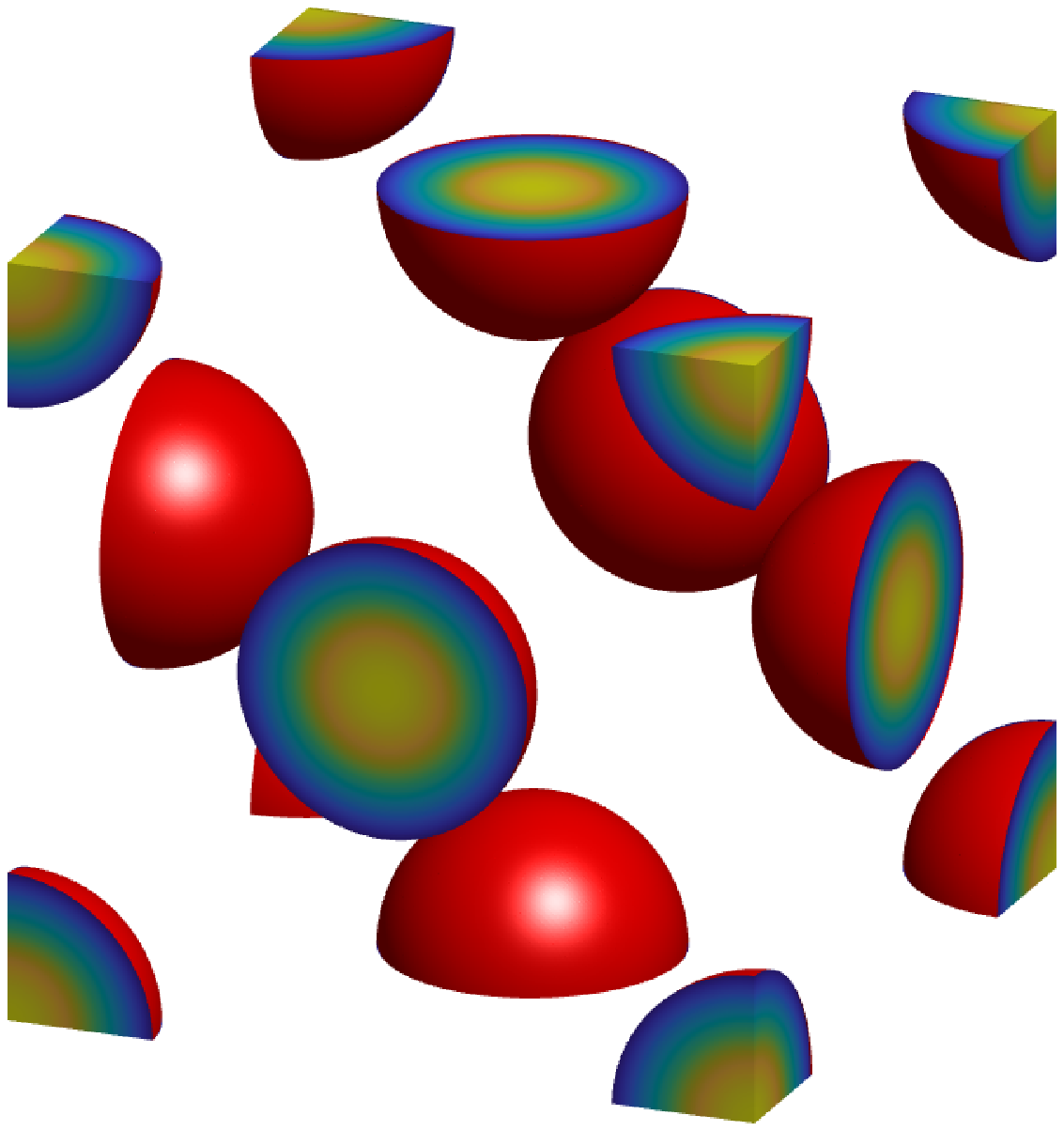}}
\subfigure[GYR]{
\includegraphics[width=2.5in]{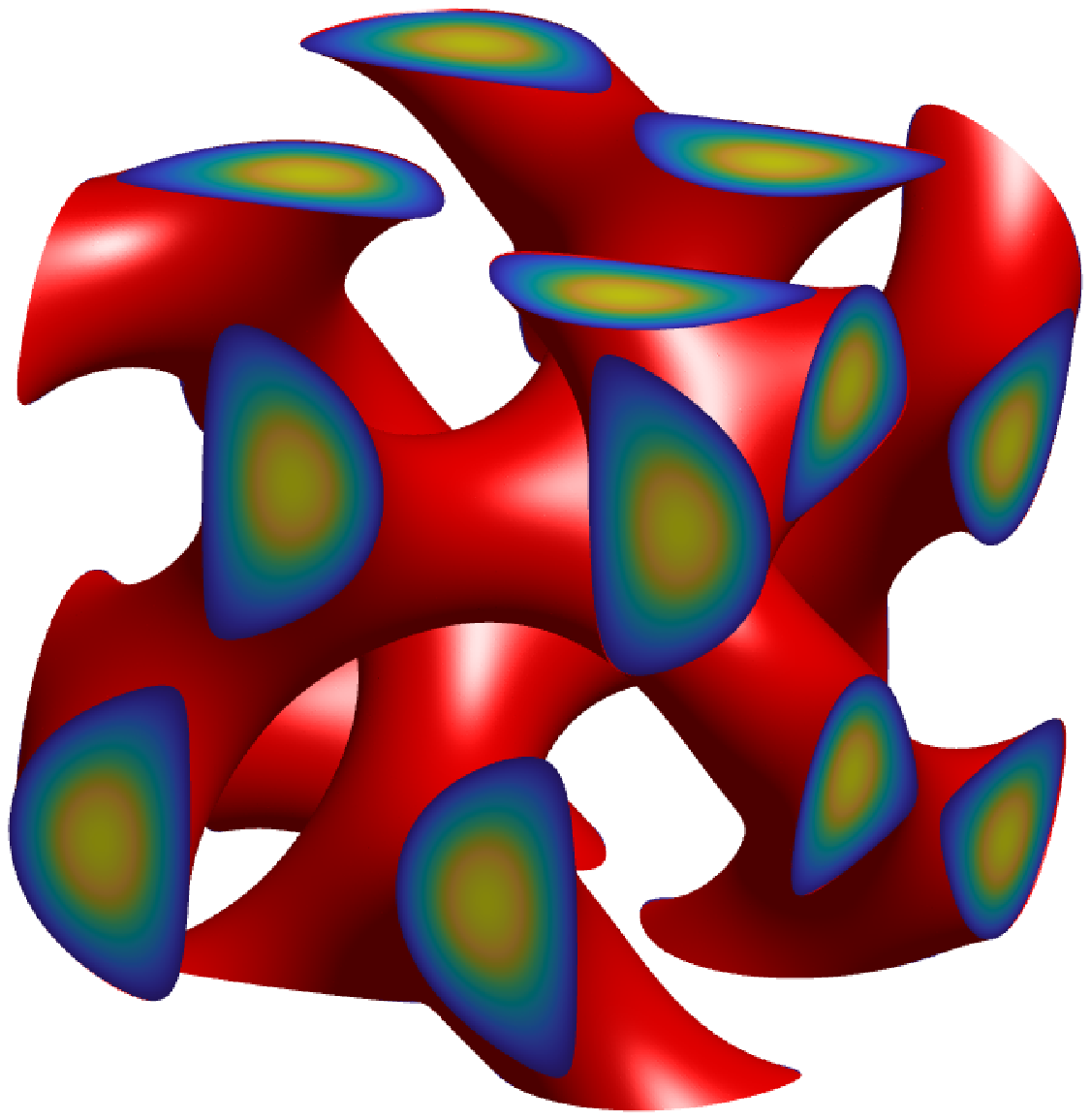}}
\caption{The three-dimensional periodic crystals in the LB model.}
\label{fig4}
\end{figure}

The computational domains in this example are defined by the unit cell $ \Omega=[0,a]^3 \subset\bbR^3$. 
Given a spatial grip $ \Cp $, the initial values are chosen as 
\begin{align*}
\phi^0(\Bx) =  \sum_{\Bk\in\Lambda^0} \wh{\phi}(\Bk) e^{i2\pi\Bk\cdot\Bx/a}\quad\forall\Bx\in\Cp, 
\end{align*}
where initial lattice points set $ \Lambda^0\subset\Ck $ only on which the Fourier coefficients located are nonzero.
The corresponding $ \Lambda^0 $ of those phases and the parameters in the LB model can be found in Table \ref{tab4}. 
For more details, please refer to \cite{jk13}.

\begin{table}[!htbp]
\centering
\caption{Initial lattice points set and parameters for three-dimensional periodic crystals.
$^o$ denotes the sign of Fourier coefficients is opposite.}
\label{tab4}
\vspace{10pt}
\begin{tabular}{|c|c|c|c|c|}
\hline
Phase                &                                   $ \Lambda^0 $ & $ a $                              &             $ \alpha $ & $ \gamma $            \\ \hline
\multirow{2}{*}{A15} &     $  (\pm2,\pm1, 0),(0,\pm2,1),(\pm1,0,2),  $ & \multirow{2}{*}{$2\sqrt{5}\pi $ }  &     \multirow{2}{*}{0} & \multirow{2}{*}{1.23} \\
                     & $   (\pm1,\pm2,0)^o,(\pm2,0,1)^o,(0,\pm1,2)^o $ &                                    &                        &                       \\ \hline
BCC                  &   $ (\pm1,\pm1,0),(\pm1,0,\pm1),(0,\pm1,\pm1) $ & $ 2\sqrt{2}\pi$                    &                      0 & 1.23                  \\ \hline
FCC                  &                               $ (\pm1,\pm1,1) $ & $ 2\sqrt{3}\pi$                    &                      0 & 2                     \\ \hline
\multirow{4}{*}{GYR} &                $ (1,-2,1),(1,2,-1),(-2,1,1),  $ & \multirow{4}{*}{$ 2\sqrt{6}\pi $ } & \multirow{4}{*}{0.47 } & \multirow{4}{*}{0.46} \\
                     &                $ (1,1, -2),(-1,1,2),(2,-1,1), $ &                                    &                        &                       \\
                     &             $ (1,2,1)^o,(-1,2,1)^o,(2,1,1)^o, $ &                                    &                        &                       \\
                     &             $ (1,1,2)^o,(1,-1,2)^o,(-2,1,1)^o $ &                                    &                        &                       \\ \hline
\end{tabular}
\end{table}

We use Legendre-Gauss-Lobatto quadrature points and change $ \Delta{t} $  to implement ASDC$_4^4$ algorithm.
The spatial grip $\Cp$ with $N=128$ is employed to compute the three-dimensional periodic crystals. 
The reference energies in Table \ref{tab5} are obtained via the spatial grip $\Cp$ with $N=256$.
Table \ref{tab5} shows the numerical results of ASDC$_4^4$ algorithm with the different step sizes.
More precisely, $ N_\text{iteration} $ and CPU time decrease with the increase of the time step $ \Delta{t} $. 
As Figure \ref{fig5} shows, ASDC$_M^K$ algorithm is also a monotone method for computing the three-dimensional periodic crystals.

\begin{table}[!htbp]
\centering
\caption{Numerical results of ASDC$_4^4$ algorithm for computing the three-dimensional periodic crystals.}
\label{tab5}
\vspace{10pt}
\begin{tabular}{|c|c|c|c|c|c|}
\hline

Phase                & $ \Delta{t} $ & $ N_\text{iteration}$ &    CPU Time (s) & $ \Ce_\Cp-\Ce_s $ &                            $ \Ce_s $ \\ \hline
\multirow{4}{*}{A15} &           0.1 & 643                   &          2970.6 & 9.2371e-13        &   \multirow{4}{*}{-57.4752889933902} \\ \cline{2-5}
                     &           0.5 & 141                   &          729.49 & 9.1660e-13        &                                      \\ \cline{2-5}
                     &             1 & 80                    &          471.26 & 9.1660e-13        &                                      \\ \cline{2-5}
                     &             2 & \textbf{51}           & \textbf{225.93} & 8.0291e-13        &                                      \\ \hline
\multirow{4}{*}{BCC} &           0.1 & 212                   &          1011.6 & 9.3614e-13        &   \multirow{4}{*}{-14.4932738221454} \\ \cline{2-5}
                     &           0.5 & 46                    &          317.59 & 9.8410e-13        &                                      \\ \cline{2-5}
                     &             1 & 26                    &          169.17 & 5.7376e-13        &                                      \\ \cline{2-5}
                     &             2 & \textbf{16}           &  \textbf{90.63} & 6.0574e-13        &                                      \\ \hline
\multirow{4}{*}{FCC} &           0.1 & 118                   &          518.26 & 7.3896e-13        &  \multirow{4}{*}{-209.6360921245683} \\ \cline{2-5}
                     &           0.5 & 26                    &          179.81 & 8.5265e-13        &                                      \\ \cline{2-5}
                     &             1 & 15                    &           67.62 & 2.8422e-13        &                                      \\ \cline{2-5}
                     &             2 & \textbf{9}            &  \textbf{61.79} & 2.2737e-13        &                                      \\ \hline
\multirow{5}{*}{GYR} &           0.1 & 435                   &          1998.8 & 9.3792e-13        & \multirow{5}{*}{-162.0665004168457 } \\ \cline{2-5}
                     &           0.5 & 94                    &           514.6 & 9.9476e-13        &                                      \\ \cline{2-5}
                     &             1 & 53                    &          329.04 & 6.5370e-13        &                                      \\ \cline{2-5}
                     &             2 & 33                    &          235.91 & 3.9790e-13        &                                      \\ \cline{2-5}
                     &             3 & \textbf{28}           &    \textbf{144} & 6.2528e-13        &                                      \\ \hline

\end{tabular}
\end{table}

\begin{figure}[!htbp]
\centering
\subfigure[A15]{
\includegraphics[width=2.5in]{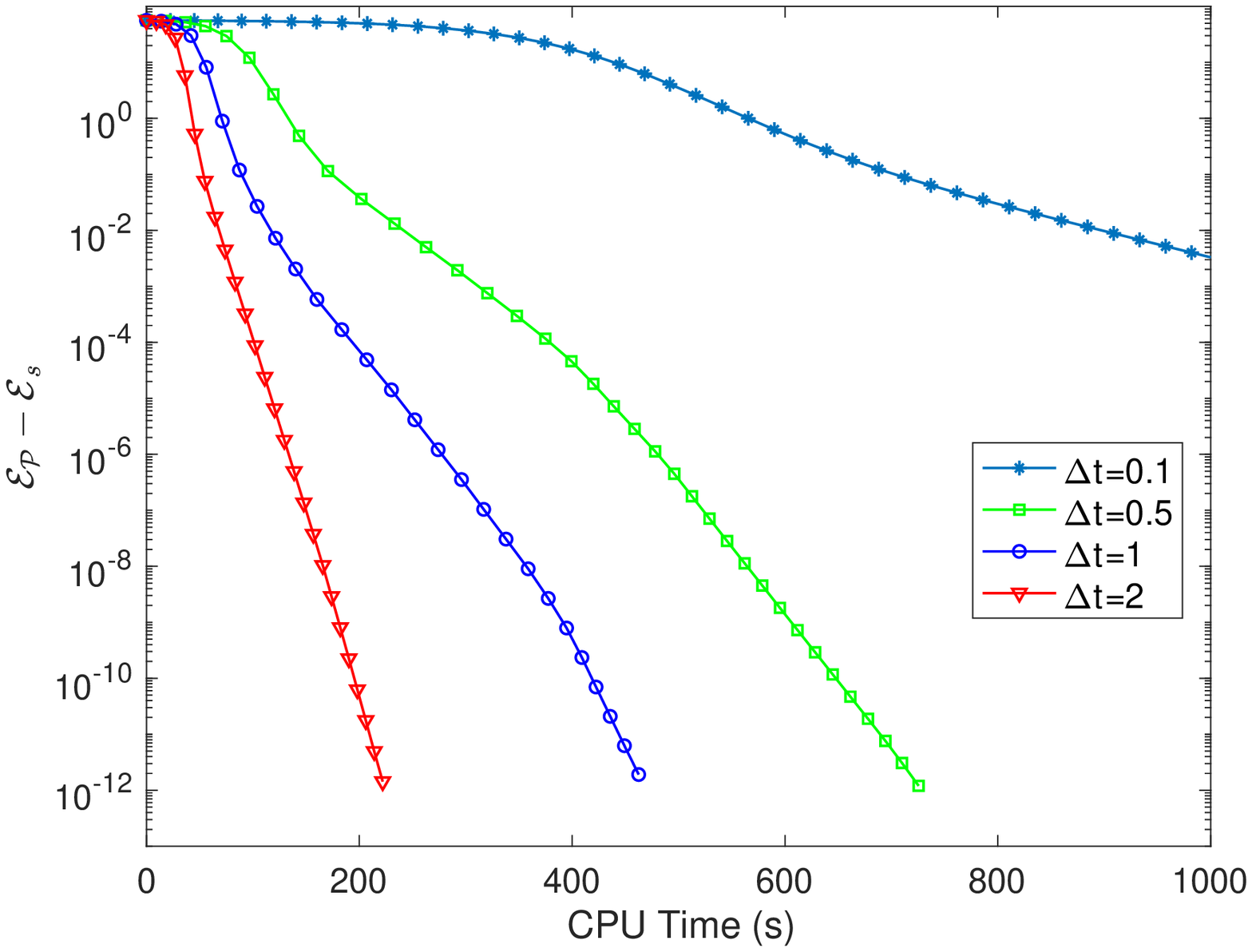}}
\subfigure[BCC]{
\includegraphics[width=2.5in]{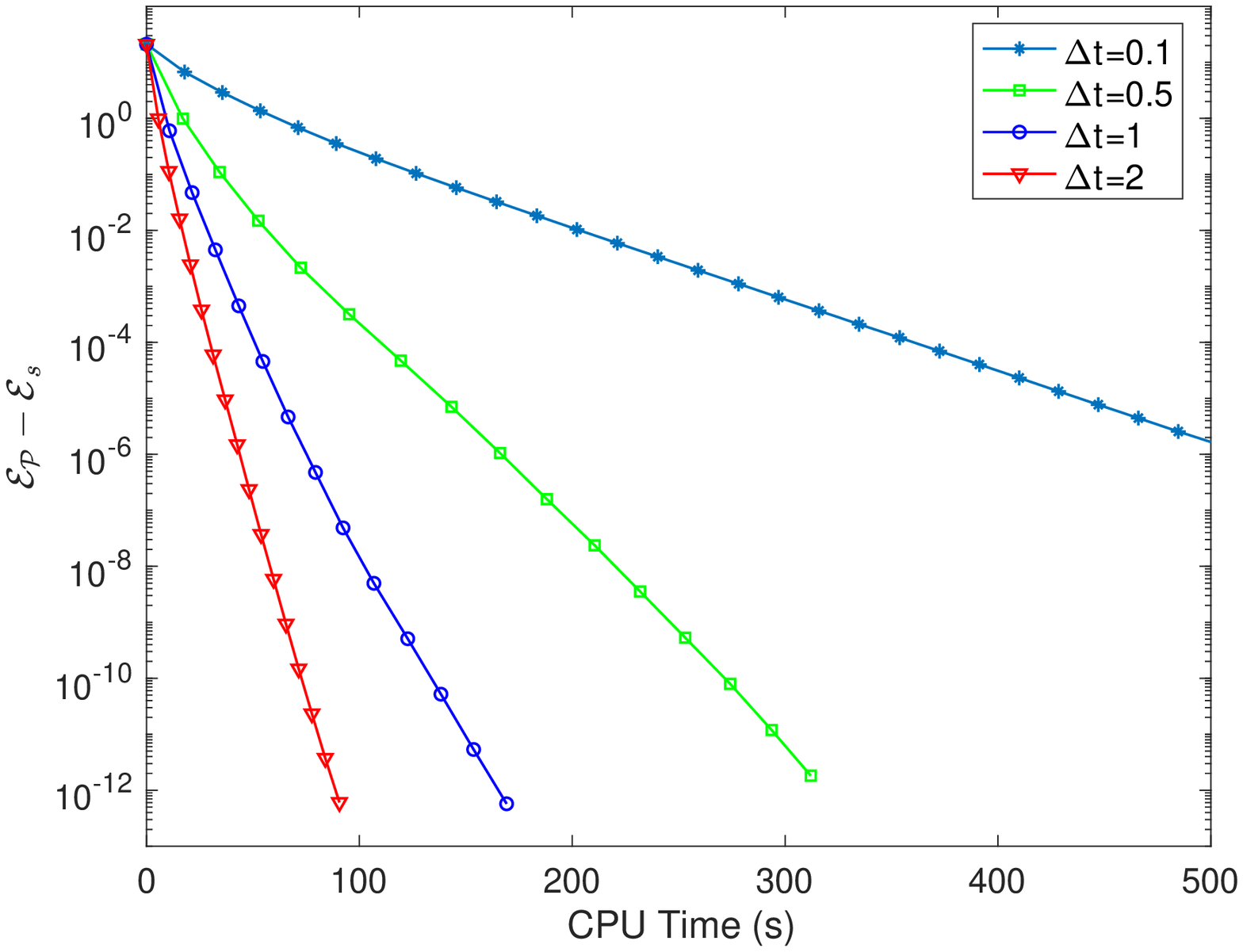}}
\subfigure[FCC]{
\includegraphics[width=2.5in]{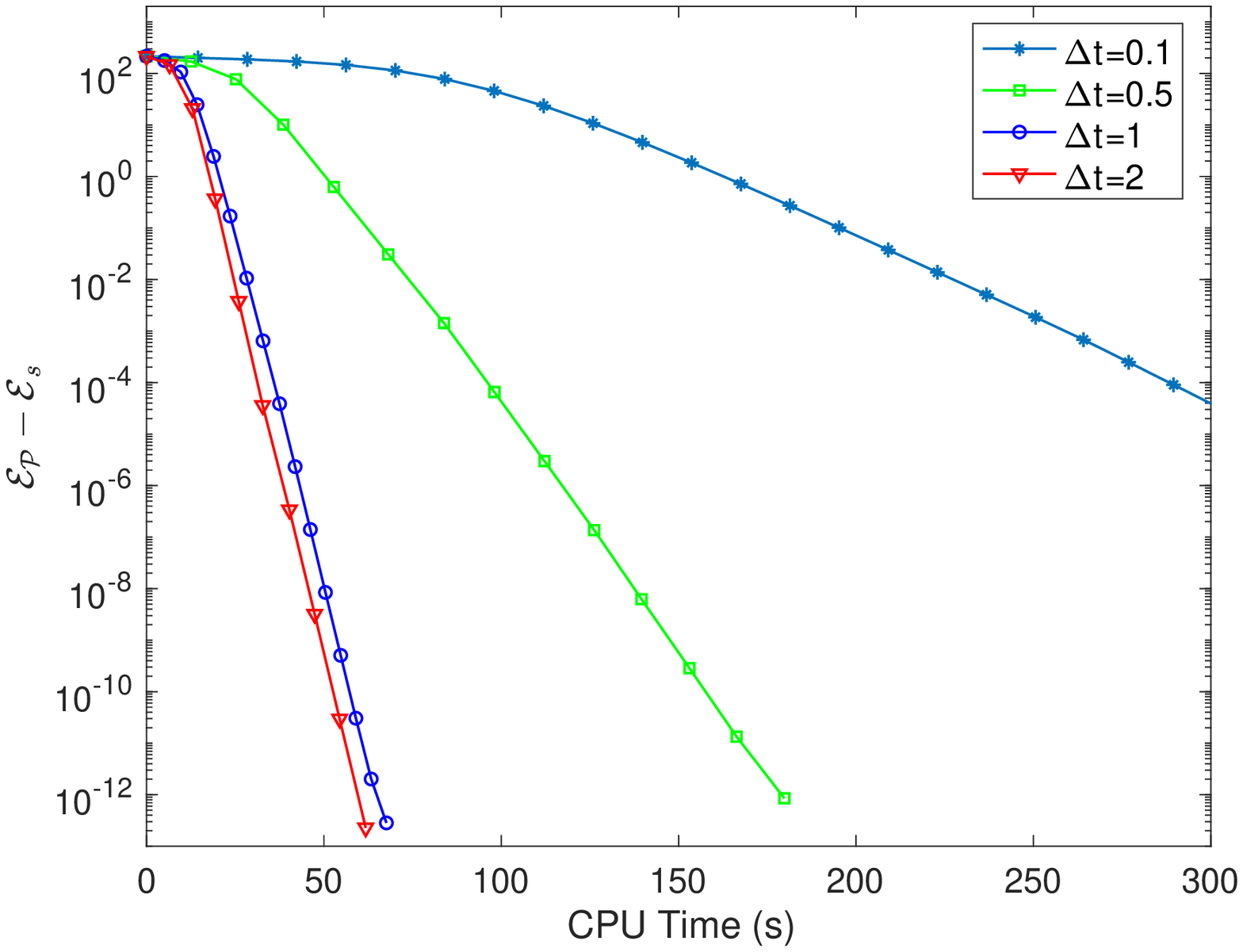}}
\subfigure[GYR]{
\includegraphics[width=2.5in]{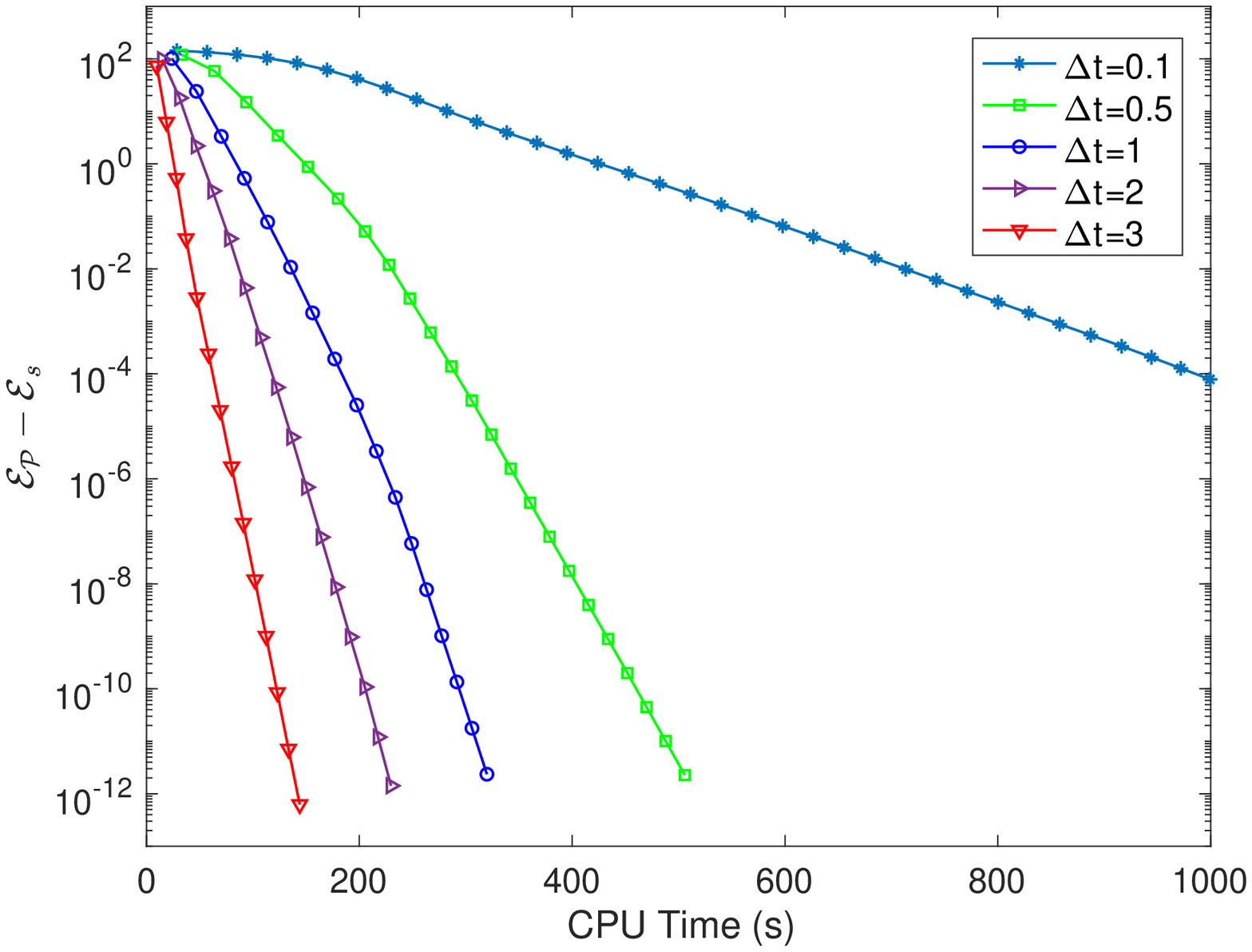}}
\caption{Energy difference over CPU Time of ASDC$_4^4$ algorithm for computing the three-dimensional periodic crystals.}
\label{fig5}
\end{figure}

\end{example}

\section{Conclusions}
This paper proposes an efficient numerical scheme to compute periodic crystals in the Landau--Brazovskii model by combining the SDC method with the linear convex splitting technique. 
Our algorithms can retain the energy dissipation and mass conservation properties during iteration.
An adaptive correction strategy is further implemented to reduce the cost time and improve the energy stability.
Numerical experiments for two and three dimensional periodic crystals are presented to show the efficiency and accuracy of the proposed method.

In the future, we will apply the SDC algorithm to the high-index saddle dynamics for efficient construction of solution landscape \cite{YinPRL,YinSCM,YinSISC}, which provides a pathway map including both stable minima and unstable saddle points. We may extend the developed numerical approach to the Lifshitz--Petrich (LP) model, which is widely used to compute quasiperiodic structures, such as the bi-frequency excited Faraday wave \cite{lif97}, and the phase transitions between crystals and quasicrystals \cite{yin21}. 

\section*{Acknowledgments}
This work was supported by the National Key Research and Development Program of China 2021YFF1200500 
and the National Natural Science Foundation of China 12225102, 12050002, and 12226316.


\section*{References}

\end{document}